\providecommand{\U}[1]{\protect\rule{.1in}{.1in}}
\theoremstyle{plain}
\newtheorem{lemma}{Lemma}
\newtheorem{proposition}{Proposition}
\newtheorem{theorem}{Theorem}
\numberwithin{equation}{section}
\newcommand{\supp}{\mathop{\mathrm{supp}}}
\newcommand{\eps}{\epsilon}
\def\XXint#1#2#3{{\setbox0=\hbox{$#1{#2#3}{\int}$ }
\vcenter{\hbox{$#2#3$ }}\kern-.6\wd0}}
\begin{document}
\title[Geometry and interior nodal sets of Steklov eigenfunctions]{Geometry and interior nodal sets of Steklov
eigenfunctions }
\author{ Jiuyi Zhu}
\address{
 Department of Mathematics\\
Louisiana State University\\
Baton Rouge, LA 70803, USA\\
Emails: zhu@math.lsu.edu}
\thanks{\noindent  Research is partially
supported by the NSF grant DMS 1656845. }
\date{}
\subjclass[2010]{35P20, 35P15, 58C40, 28A78. } \keywords {Nodal sets,
Upper bound, Steklov eigenfunctions. } \dedicatory{}

\begin{abstract}
We investigate the geometric properties of Steklov eigenfunctions in
smooth manifolds.  We derive the refined doubling estimates and
Bernstein's inequalities. For the real analytic
manifolds, we are able to obtain the sharp upper bound for the
measure of interior nodal sets.
\end{abstract}

\maketitle
\section{Introduction}
In this paper, we address the geometric properties and interior
nodal sets of Steklov eigenfunctions
\begin{equation}
\left\{
\begin{array}{lll}
\triangle_g e_\lambda(x)=0,
 \quad x\in\mathcal{M},
\medskip \\
\frac{\partial e_\lambda}{\partial \nu}(x)=\lambda e_\lambda(x),
\quad x\in
\partial\mathcal{M},
\end{array}
\right. \label{Stek}
\end{equation}
where $\nu$ is a unit outward normal on $\partial\mathcal{M}$.
 Assume that $(\mathcal{M}, g)$ is a $n$-dimensional
smooth, connected and compact manifold with smooth boundary
$\partial \mathcal{M}$, where $n\geq 2$.  The Steklov eigenfunctions
were first studied by Steklov in 1902 for bounded domains in the
plane. It is also regarded as eigenfunctions of the
Dirichlet-to-Neumann map, which is a first order homogeneous,
self-adjoint and elliptic pseudodifferential operator.
 The spectrum $\lambda_j$ of Steklov eigenvalue problem consists of
 an infinite increasing sequence
 with $$0=\lambda_0<\lambda_1\leq
\lambda_2\leq \lambda_3,\cdots, \
 \ \mbox{and} \ \ \lim_{j\to\infty}\lambda_j=\infty.$$
The eigenfunctions $\{ e_{\lambda_j}\}$ form  an orthonormal basis
such that
$$ e_{\lambda_j}\in C^\infty(\mathcal{M}), \quad
\int_{\partial\mathcal{M}} e_{\lambda_j} e_{\lambda_k}\, dV_{
{g}}=\delta_{j}^{k}.
$$

Recently, the study of nodal geometry of eigenfunctions has been attracting much
attention. Estimating the Hausdorff measure of nodal sets has
always been an important subject concerning the nodal geometry of
eigenfunctions. The celebrated problem about nodal sets centers
around the famous Yau's conjecture for smooth manifolds.
 Let
$e_\lambda$ be $L^2$ normalized eigenfunctions of \begin{equation}
-\triangle_g e_\lambda=\lambda^2 e_\lambda
\label{class}\end{equation} on compact manifolds $(\mathcal{M}, g)$
without boundary,
 Yau conjectured that  the upper and lower bound of nodal sets of
eigenfunctions in (\ref{class}) are controlled by
\begin{equation}c\lambda\leq H^{n-1}(\{x\in\mathcal{M}|e_\lambda(x)=0\})\leq C\lambda \label{yau}
\end{equation} where $C, c$
depend only on the manifold $\mathcal{M}$. The conjecture is shown
to be true for real analytic manifolds by Donnelly-Fefferman in
\cite{DF}, \cite{DF2}. Lin \cite{Lin} also proved the upper bound for the
analytic manifolds using a different approach. 

Let us briefly review the recent literature concerning the progress of Yau's conjecture on nodal sets of classical eigenfunctions (\ref{class}). For the conjecture (\ref{yau}) on the measure of nodal sets on smooth manifolds, there are important breakthrough made by Logunov and Malinnikova \cite{LM},  \cite{Lo1} and \cite{Lo2} in recent years. For the upper bound of nodal sets on two dimensional manifolds,   Logunov and
Malinnikova \cite{LM} showed that $H^{1}(\{x\in\mathcal{M}| e_\lambda(x)=0\})\leq C \lambda^{\frac{3}{2}-\epsilon} $, which slightly improves the upper bound $C \lambda^{\frac{3}{2}}$ by Donnelly and Fefferman \cite{DF1} and  Dong \cite{D}. For higher dimensions $n\geq 3$ on smooth manifolds,  Logunov in \cite{Lo1} obtained a polynomial upper bound.
 The polynomial upper bound improves the exponential upper bound derived by  Hardt and Simon \cite{HS}.
For the lower bound, Logunov \cite{Lo2} completely solved the Yau's conjecture and obtained the sharp lower bound in (\ref{yau}).
For $n=2$, such sharp lower bound was obtained earlier by Br\"uning \cite{Br}. This sharp lower bound  improves a polynomial lower bound obtained early by Colding and  Minicozzi \cite{CM}, Sogge and Zelditch \cite{SZ}, \cite{SZ1}. See also other polynomial lower bounds by different methods, e.g. \cite{HSo}, \cite{M}, \cite{S}.  See also other related results on nodal sets and geometric properties of classical eigenfunctions, e.g. \cite{ChM}, \cite{Han}, \cite{HLu}, \cite{Lu}.
For detailed account about this subject, interested readers may refer to the book
\cite{HL} and survey \cite{Z}.

For the Steklov eigenfunctions, by the maximum principle, there
exist nodal sets in $\mathcal{M}$ and those sets intersect the
boundary $\partial \mathcal{M}$ traservasally.  It is interesting to
ask Yau's type questions about the Hausdorff measure of nodal sets
of Steklov eigenfunctions on the boundary and interior of the manifolds,
respectively. The natural and corresponding conjecture for Steklov
eigenfunctions should state exactly as \begin{equation}
c\lambda\leq H^{n-2}(\{x\in \partial\mathcal {M},
e_\lambda(x)=0\})\leq C\lambda, \label{yau1}\end{equation}
\begin{equation} c\lambda\leq
H^{n-1}(\{x\in \mathcal {M}, e_\lambda(x)=0\})\leq C\lambda.
\label{yau2}\end{equation} See also the survey by Girouard and
Polterovich in \cite{GP} about these open questions.

Recently, much work has been devoted to the bounds of nodal sets of
Steklov eigenfunctions on the  boundary
\begin{equation} Z_\lambda=\{x\in\partial\mathcal{M}|
e_\lambda(x)=0\}. \label{newn}
\end{equation}
 The study of (\ref{newn}) was initiated by Bellova and Lin \cite{BL} who proved the $H^{n-2}(Z_\lambda)\leq
C\lambda^6$ with $C$ depending only on $\mathcal{M}$, if
$\mathcal{M}$ is an analytic manifold. By microlocal analysis
argument, Zelditch \cite{Z1} was able to improve their results and
gave the optimal upper bound $H^{n-2}(Z_\lambda)\leq C\lambda$ for
analytic manifolds. For the smooth manifold $\mathcal{M}$, Wang and
the author in \cite{WZ}  established a lower bound
\begin{equation}H^{n-2}(Z_\lambda)\geq C\lambda^{\frac{4-n}{2}}\label{sogg}\end{equation} by
considering the fact that the Steklov eigenfunctions are
eigenfunctions of first order elliptic pseudodifferential operator.
The polynomial lower bound (\ref{sogg}) is the Steklov analogue of
the lower bounds of nodal sets for classical eigenfunctions
(\ref{class}) obtained in \cite{CM} and \cite{SZ1}.

Concerning about the bounds of interior nodal sets of
eigenfunctions,
$$\mathcal{N}_\lambda=\{x\in\mathcal{M}|e_\lambda(x)=0\},        $$
 Sogge, Wang and the author \cite{SWZ} obtained a
lower bound for interior nodal sets
$$H^{n-1}({\mathcal{N}}_\lambda)\geq
C\lambda^{\frac{2-n}{2}}$$ for a smooth manifold $\mathcal{M}$. The
measure of nodal sets is more clear on surfaces. In \cite{Zh1}, the
author was able to obtain an upper bound for the measure of interior
nodal sets $$H^1({\mathcal{N}}_\lambda)\leq
C\lambda^{\frac{3}{2}}.$$  The singular sets $\mathcal{S}_\lambda=
\{ x\in\mathcal{M}|e_\lambda=0, \nabla e_\lambda=0\} $ are finite
points on the nodal surfaces. It was also shown that
$H^0(\mathcal{S}_\lambda)\leq C\lambda^2$ in \cite{Zh1}. Recently,
Polterovich, Sher and Toth \cite{PST} could verify Yau's type
conjecture for upper and lower bounds in (\ref{yau2}) for the
real-analytic Riemannian surfaces $\mathcal{M}$. Georgiev and Roy-fortin \cite{GR} obtained polynomial upper bounds for interior
nodal sets on smooth manifolds. There are still
many challenges for the study of Steklov eigenfunctions. For
instance, it is well-known that the classical eigenfunctions in
(\ref{class}) are so dense that there are nodal sets in each
geodesic ball with radius $C\lambda^{-1}$. This fundamental result
is crucial to derive the lower bounds of nodal sets for classical eigenfunctions (\ref{class}) in \cite{DF} and
\cite{Br}. For the Steklov eigenfunctions, it is unknown whether
such density results remain true on the boundary and interior of the
manifold, which cause difficulties in studying the Steklov
eigenfunctions.

An interesting topic in the study of eigenfunction is called as the doubling
inequality.  Doubling inequality plays an important role in deriving
strong unique continuation property, the vanishing order of
eigenfunctions and obtaining the measure of nodal sets, see e.g.
\cite{DF}, \cite{DF2}. The doubling inequality for classical
eigenfunctions (\ref{class})
\begin{equation}
\int_{\mathbb B (p,\, 2r)}e_\lambda^2\leq e^{C\lambda} \int_{\mathbb
B(p,\, r)}e_\lambda^2 \label{doub}
\end{equation}
is derived using Carleman estimates in \cite{DF} for $0<r<r_0$,
where $\mathbb B(p, c)$ denotes as a ball in $\mathcal{M}$ centered
at $p$ with radius $c$, and $C$, $r_0$ depends on $\mathcal{M}$.  For
the Steklov eigenfunctions on $\partial\mathcal{M}$, the author has obtained a similar type
of doubling inequality on the boundary $\partial\mathcal{M}$ and
derived that the sharp vanishing order is less than $C\lambda$ on the
boundary $\partial\mathcal{M}$ in \cite{Zh}. For Steklov
eigenfunctions in $\mathcal{M}$, we were also able to get the
doubling inequality as (\ref{doub}) in \cite{Zh1}. For the classical
eigenfunctions (\ref{class}), a refined doubling inequality
\begin{equation}
\int_{\mathbb B (p,\, (1+\frac{1}{\lambda})r)}e_\lambda^2\leq C
\int_{\mathbb B(p,\, r)}e_\lambda^2 \label{rdoub}
\end{equation}
was derived in \cite{DF3} by some  stronger Carleman estimates. The
refine doubling inequality also leads to Bernstein's gradient
inequalities for classical eigenfunctions. The first goal in this note is
to study a refined version doubling inequality for the Steklov
eigenfunctions and its applications.
\begin{theorem}
For the Steklov eigenfunctions in (\ref{Stek}), there hold \\
(A): a refined doubling inequality
\begin{equation}
 \int_{\mathbb B
(p,\, (1+\frac{1}{\lambda})r)}e_\lambda^2\leq C \int_{\mathbb B(p,\,
r)}e_\lambda^2, \nonumber
\end{equation}
(B): $L^2$-Bernstein's inequality
\begin{equation}
\int_{\mathbb B (p,\, r)}|\nabla e_\lambda|^2\leq
\frac{C\lambda^2}{r^2}\int_{\mathbb B(p,\, r)}e^2_\lambda, \nonumber
\end{equation}
(C) $L^\infty$-Bernstein's inequality
\begin{equation}
\max_{\mathbb B (p,\, r)}|\nabla e_\lambda| \leq
\frac{C\lambda^{\frac{n+2}{2}}}{r}\max_{\mathbb B(p,\,
r)}|e_\lambda| \nonumber
\end{equation}
for $\mathbb B (p,\, (1+\frac{1}{\lambda})r)\subset \mathcal{M}$ and
$0<r<r_0$, where $r_0$ depends on $\mathcal{M}$.\label{th1}
\end{theorem}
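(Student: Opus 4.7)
The plan is to reduce all three statements to the fact that $e_\lambda$ is harmonic in $\mathcal{M}$ (so classical frequency and Caccioppoli machinery applies inside balls that avoid $\partial\mathcal{M}$) combined with the Steklov-specific input that its \emph{local vanishing order is at most} $C\lambda$. The latter is equivalent to the standard doubling inequality $\int_{\mathbb B(p,2r)}e_\lambda^2\le e^{C\lambda}\int_{\mathbb B(p,r)}e_\lambda^2$, which was established in \cite{Zh1}. Since the refined ball $\mathbb B(p,(1+\tfrac1\lambda)r)$ is compactly contained in $\mathcal{M}$, we may work in geodesic normal coordinates around $p$, in which the metric $g$ is a smooth perturbation of the Euclidean one of size $O(r^2)$.

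For (A), I would introduce the Almgren-type frequency function
\[
N(r)=\frac{r\int_{\mathbb B(p,r)}|\nabla e_\lambda|^2\,dV_g}{\int_{\partial \mathbb B(p,r)} e_\lambda^2\,dS_g},
\]
and the height function $H(r)=\int_{\partial \mathbb B(p,r)}e_\lambda^2\,dS_g$. Using that $\triangle_g e_\lambda=0$ and integration by parts in normal coordinates, one obtains the standard quasi-monotonicity $(e^{Cr}N(r))'\ge 0$ and the identity
\[
\frac{d}{dr}\log H(r)=\frac{n-1}{r}+\frac{2N(r)}{r}+O(1).
\]
The doubling hypothesis from \cite{Zh1} translates into the frequency bound $N(s)\le C\lambda$ uniformly for $s\in(0,r_0)$. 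Integrating the above identity over $[r,(1+\tfrac1\lambda)r]$ yields
\[
\log\frac{H((1+\tfrac1\lambda)r)}{H(r)}\le (n-1)\log(1+\tfrac1\lambda)+2C\lambda\log(1+\tfrac1\lambda)+O(r/\lambda)\le C',
\]
so $H$ grows by a universal factor across this small interval. Integrating $H$ in the radial variable and using the coarea formula then upgrades this to the ball-integral form stated in (A).

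For (B), I would use the standard Caccioppoli inequality on the harmonic function $e_\lambda$: pick a cutoff $\chi\in C_c^\infty(\mathbb B(p,(1+\tfrac1\lambda)r))$ with $\chi\equiv 1$ on $\mathbb B(p,r)$ and $|\nabla\chi|\le C\lambda/r$. Multiplying $\triangle_g e_\lambda=0$ by $\chi^2 e_\lambda$ and integrating by parts gives
\[
\int_{\mathbb B(p,r)}|\nabla e_\lambda|^2\,dV_g\le\frac{C\lambda^2}{r^2}\int_{\mathbb B(p,(1+\tfrac1\lambda)r)}e_\lambda^2\,dV_g,
\]
and (A) converts the right-hand side into the desired $L^2$ integral on $\mathbb B(p,r)$.

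For (C), I would combine the classical interior gradient estimate for harmonic functions, applied on the slightly larger ball, with a pointwise version of (A). The harmonic gradient estimate gives
\[
\max_{\mathbb B(p,r)}|\nabla e_\lambda|\le\frac{C\lambda}{r}\max_{\mathbb B(p,(1+\tfrac1\lambda)r)}|e_\lambda|.
\]
To convert the sup on the larger ball to a sup on $\mathbb B(p,r)$, I would use the mean value inequality: for any $q\in\mathbb B(p,(1+\tfrac1\lambda)r)$, apply the $L^2$ mean value bound on $\mathbb B(q,r/\lambda)\subset \mathbb B(p,(1+\tfrac{2}{\lambda})r)$ to get
\[
|e_\lambda(q)|\le C(\lambda/r)^{n/2}\|e_\lambda\|_{L^2(\mathbb B(p,(1+\tfrac{2}{\lambda})r))}\le C\lambda^{n/2}\max_{\mathbb B(p,r)}|e_\lambda|,
\]
after applying (A) a bounded number of times to reduce to $\mathbb B(p,r)$ and bounding the $L^2$ norm by volume times the sup. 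Chaining the two estimates produces the $\lambda^{(n+2)/2}/r$ factor.

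The main obstacle will be step (A): establishing the frequency inequalities with constants that do not themselves depend on $\lambda$, so that the integration over the tiny interval of length $r/\lambda$ actually yields a universal bound. On a manifold the frequency is only quasi-monotone, and the error terms arising from the curvature of $g$ and from the non-exact harmonicity in coordinates must be kept to order $O(r)$ (not $O(\lambda r)$). Once this is done cleanly, statements (B) and (C) are almost immediate consequences.
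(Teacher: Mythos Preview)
Your route via the Almgren frequency function is genuinely different from the paper's.  The paper proves (A) by first establishing a \emph{strengthened} Carleman inequality on the doubled manifold $\overline{\mathcal M}$ for the auxiliary function $u=e_\lambda e^{\lambda\varrho}$ (its Proposition~\ref{pro2}): the estimate carries an additional term $C\beta^4\int_{\mathbb B(p,\delta(1+C/\beta))}e^{2\beta\psi}u^2$ supported on a thin annulus near the inner cutoff radius, and after a cutoff argument this extra term is exactly what produces the refined doubling.  Your derivations of (B) and (C) from (A) are essentially the same as the paper's (Caccioppoli/interior gradient estimate at scale $r/\lambda$, then apply (A)).  For (A), your argument---bound $N(s)\le C\lambda$ from the standard doubling of \cite{Zh1} and then integrate $(\log H)'$ over the interval of length $r/\lambda$---is clean and correct once $p$ lies well inside $\mathcal M$, say when $\mathbb B(p,4r)\subset\mathcal M$.

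The gap is at the boundary.  The theorem only assumes $\mathbb B\big(p,(1+\tfrac1\lambda)r\big)\subset\mathcal M$, so $d(p,\partial\mathcal M)$ may be barely larger than $r$.  To run your argument you need $N_{e_\lambda}(s)\le C\lambda$ for $s$ up to $(1+\tfrac1\lambda)r$; but deducing a frequency bound at scale $s$ from $L^2$-doubling requires doubling on balls of comparable or larger radius, and frequency monotonicity only propagates bounds \emph{downward} in $s$.  From interior doubling for $e_\lambda$ you therefore obtain $N(s)\le C\lambda$ only for $s\lesssim d(p,\partial\mathcal M)/4$, which does not reach $s=(1+\tfrac1\lambda)r$ when $p$ is close to $\partial\mathcal M$.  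The paper sidesteps this by carrying out the entire Carleman argument on the boundaryless $\overline{\mathcal M}$ (where there is always room out to a fixed radius $h$) and converting back to $e_\lambda$ only at the very end, using that the relevant annulus has width $O(r/\lambda)$ so the comparison factor $e^{\lambda\,\mathrm{osc}\,\varrho}$ stays bounded.  If instead you try to run the frequency argument for $u$ on $\overline{\mathcal M}$, the coefficients $\bar b,\bar q$ have size $\lambda,\lambda^2$ and the metric is only Lipschitz, so the Garofalo--Lin quasi-monotonicity picks up $\lambda$-dependent errors that do not obviously integrate to $O(1)$ over $[r,(1+\tfrac1\lambda)r]$.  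This is precisely the obstacle you flag at the end of your proposal, and near $\partial\mathcal M$ it is a real one; the strengthened Carleman estimate is the device the paper uses to overcome it.
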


Our second goal is to obtain the optimal upper bound of interior
nodal sets of Steklov eigenfunctions for real analytic manifolds. Our work extends the optimal
upper bound in \cite{PST} to real analytic manifolds in any dimensions,
which proves the upper bound of Yau's type conjecture for interior
nodal sets in (\ref{yau2}). 
\begin{theorem}
Let $\mathcal{M}$ be a real analytic compact and connected manifold
with boundary. There exists a positive constant $C$ depending on $\mathcal{M}$
such that,
$$  H^{n-1}(\mathcal{N}_\lambda)\leq C\lambda
   $$ for the Steklov eigenfunctions.
   \label{th2}
\end{theorem}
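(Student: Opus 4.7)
The plan is to adapt Donnelly and Fefferman's proof of Yau's upper bound for real analytic manifolds without boundary. The Steklov problem will first be converted into an elliptic problem with a Neumann boundary condition that preserves the nodal set. Let $d(x)$ be a smooth function on $\mathcal{M}$ which equals the distance to $\partial\mathcal{M}$ in a tubular neighborhood of the boundary, so that $d = 0$ and $\partial_\nu d = -1$ there. Set
\[
u(x) := e^{\lambda d(x)}\, e_\lambda(x).
\]
Using $\triangle_g e_\lambda = 0$ and $\partial_\nu e_\lambda = \lambda e_\lambda$, a direct calculation gives
\[
\triangle_g u - 2\lambda\, \nabla d \cdot \nabla u + \bigl(\lambda^2 |\nabla d|^2 - \lambda \triangle_g d\bigr) u = 0 \quad \text{in } \mathcal{M},
\]
with the Neumann condition $\partial_\nu u = 0$ on $\partial\mathcal{M}$. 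Since $e^{\lambda d}$ never vanishes, the nodal set of $u$ coincides with $\mathcal{N}_\lambda$, so it suffices to bound $H^{n-1}(\{u=0\})$.

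Next I would extend $u$ and control its complex growth. Real analyticity of $\mathcal{M}$, $g$, and the coefficients above, combined with the Neumann condition, permits an even reflection of $u$ across $\partial\mathcal{M}$ to a real analytic solution of an equation of the same form on a slightly larger analytic manifold $\widetilde{\mathcal{M}}\supset\mathcal{M}$. The refined doubling of Theorem~\ref{th1}(A) transfers from $e_\lambda$ to $u$ on balls of radius $r\lesssim 1/\lambda$, where $e^{\lambda d}$ has bounded oscillation; iterating yields the standard doubling
\[
\int_{\mathbb B(p,\, 2r)} u^2 \leq e^{C\lambda}\int_{\mathbb B(p,\, r)} u^2
\]
uniformly for $p\in\widetilde{\mathcal{M}}$ and $r\leq r_0$. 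Analyticity then provides a holomorphic extension $U$ of $u$ to a complex neighborhood of fixed radius, and combining this doubling bound with interior elliptic estimates and the Cauchy formula gives
\[
\sup_{\mathbb B_{\mathbb C}(p,\, \rho)} |U| \leq e^{C\lambda}\,\sup_{\mathbb B(p,\, 2\rho)} |u|
\]
for some small $\rho > 0$ independent of $\lambda$.

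With this growth estimate in hand, the Donnelly--Fefferman nodal counting closes the argument. On each complex line through $p$, Jensen's formula shows that $U$ has at most $C\lambda$ zeros in a complex disk of radius $\rho/2$. A Crofton-type integral formula, as in \cite{DF, DF1}, bounds $H^{n-1}(\{u=0\}\cap \mathbb B(p,\rho/4))$ by the average of these zero counts over complex lines through $p$, yielding at most $C\lambda$ per ball; summing over a finite covering of $\mathcal{M}$ gives $H^{n-1}(\mathcal{N}_\lambda) \leq C\lambda$. The main obstacle I anticipate lies in the doubling transfer of the second step, especially near and across $\partial\mathcal{M}$: the $\lambda$-dependent weight $e^{\lambda d}$ must not inflate the doubling constant for $u$ beyond $e^{C\lambda}$, and the even reflection must preserve both the doubling inequality and the holomorphic extension uniformly up to the boundary. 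This is precisely where the refined doubling of Theorem~\ref{th1} enters, since one cannot cascade many ordinary doublings across the reflection interface without loss.
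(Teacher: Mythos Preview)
Your overall strategy matches the paper's, but there is a genuine gap in the extension step. An even reflection across $\partial\mathcal{M}$ does \emph{not} yield a real analytic function: the doubled metric $g'$ is only Lipschitz at the interface (this is precisely why Section~2 has to work with Lipschitz metrics), so the reflected $u$ solves an elliptic equation with merely Lipschitz coefficients and is at best $C^{2,\alpha}$ there, not analytic. Consequently neither the holomorphic extension nor the Jensen/Crofton zero counting is available on the reflected piece. The paper instead extends $\hat u$ across $\partial\mathcal{M}$ via the Cauchy--Kowaleski theorem (Proposition~\ref{pro3}): after the rescaling $x\mapsto p+x/\lambda$ the coefficients $\tilde b,\tilde q$ become bounded independently of $\lambda$, so Cauchy--Kowaleski gives an analytic continuation to a ball of fixed radius in the rescaled variable, i.e.\ to a $C/\lambda$-neighborhood of $\partial\mathcal{M}$; iterating this $O(\lambda)$ times produces an analytic extension to a fixed neighborhood $\mathcal{M}_2$ with growth at most $e^{C\lambda}$. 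This is what must replace your reflection.

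You also misidentify the role of Theorem~\ref{th1}. The paper's proof of Theorem~\ref{th2} does not use the refined doubling at all; it relies only on the ordinary doubling inequality of Proposition~\ref{proo} (established in \cite{Zh1} on the Lipschitz double), which already gives $\|u\|_{L^\infty(\mathbb B(p,2r))}\le e^{C\lambda}\|u\|_{L^\infty(\mathbb B(p,r))}$. Passing between $u$ and $e_\lambda$ costs only a factor $e^{C\lambda}$ because $\varrho$ is bounded, so no refined estimate is needed near or across the boundary. Theorem~\ref{th1} is a separate result feeding into the Bernstein inequalities, not into the nodal bound; your concern about ``cascading ordinary doublings across the reflection interface'' disappears once the Cauchy--Kowaleski extension is used, since the standard doubling on $\overline{\mathcal{M}}$ combined with the $e^{C\lambda}$ extension bound in (\ref{nnn}) directly yields (\ref{sst}).
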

 The outline of the paper is as follows. In section 2, we
reduce the Steklov eigenvalue problem into an equivalent elliptic
equation without boundary. Then we obtain the refined doubling
inequality and show Theorem \ref{th1}. Section 3
is devoted to the upper bound of interior nodal sets for real
analytic manifolds. Section 4 is the appendix which provides the proof of some arguments for the Carleman estimates. The letter $c$, $C$, $C_i$ denote generic
positive constants and do not depend on $\lambda$. They may vary in
different lines and sections. In the paper, since we study the asymptotic properties for eigenfunctions, we assume that $\lambda$ is sufficiently large.

\noindent{\bf Acknowledgement.} It is my pleasure to thank Professor
Christopher D. Sogge and Joel Spruck for support and
helpful discussions about the topic of eigenfunctions. Especially, the author thanks Professor Steve Zelditch for critical comments and constructive suggestions on the nodal sets estimates in section 3.

\section{Refined doubling inequality}

In this section, we will establish a stronger Carleman estimate than
that in \cite{Zh}.  We will
transform the Steklov eigenvalue problem into a second order
elliptic equation on a boundaryless manifold. The eigenvalue
$\lambda$  will be reflected in the coefficient functions of the
elliptic equation.

 To make the Steklov eigenvalue problem into an elliptic equation, adapting the ideas in \cite{BL}, we choose an auxiliary function involving the distance function. Let $d(x)= dist\{x,
\partial\mathcal{M}\}$ be the distance function from $x\in\mathcal{M}$ to the
boundary $\partial\mathcal{M}$. If $\mathcal{M}$ is smooth, $d(x)$
is smooth in the small neighborhood $\mathcal{M}^0_{\rho}$ of
$\partial\mathcal{M}$ in $\mathcal{M}$, where $\mathcal{M}^0_{\rho}=\{x\in \mathcal{M}| dist\{x, \ \partial\mathcal{M}\}\leq \rho\}.$   By the partition of unity, we extend $d(x)$ in a smooth
manner by introducing
\begin{equation}
\varrho(x)=\left \{\begin{array}{lll} d(x) \quad x\in \mathcal{M}^0_{\rho}, \medskip\nonumber\\
l(x) \quad x\in \mathcal{M}\backslash\mathcal{M}^0_{\rho}.
\end{array}
\right. \label{errorr}
\end{equation}
Therefore, the extended function $\varrho(x)$ is a smooth function in
$\mathcal{M}$. We consider an auxiliary function
$$u(x)=e_\lambda \exp\{\lambda \varrho(x)\}.  $$
Then the new function $u(x)$ satisfies
\begin{equation}
\left \{ \begin{array}{lll} \triangle_g u+b(x)\cdot\nabla_g
u+q(x)u=0 &\quad \mbox{in}\ \mathcal{M},
\medskip\\
\frac{\partial u}{\partial \nu}=0 &\quad \mbox{on}\
\partial\mathcal{M}
\end{array}
\right.
\end{equation}
with
\begin{equation}
\left \{\begin{array}{lll}
b(x)=-2\lambda \nabla_g \varrho(x),   \medskip\\
q(x)=\lambda^2|\nabla_g \varrho(x)|^2-\lambda\triangle_g \varrho(x).
\end{array}
\right. \label{fffw}
\end{equation}
In order to construct a boundaryless model, we attach two copies of
$\mathcal{M}$ along the boundary and consider a double manifold
$\overline{\mathcal{M}}=\mathcal{M}\cup \mathcal{M}$. Then induced
metric ${g'}$ of ${g}$  on the double manifold
$\overline{\mathcal{M}}$ is Lipschitz. We consider a canonical
involutive isometry $\mathcal {F}: \overline{\mathcal{M}}\to
\overline{\mathcal{M}}$ which interchanges the two copies of
${\mathcal{M}}$. In this sense, the function $u(x)$ can be extended
to the double manifold by a even extension as $\overline{\mathcal{M}}$ by $u\circ \mathcal
{F}= u$. Thus, $u(x)$ satisfies
\begin{equation}
\triangle_{g'} u+\bar{b}(x)\cdot\nabla_{g'} u+\bar{ q} (x)u=0 \quad
\mbox{in}\ \overline{\mathcal{M}}.  \label{star}
\end{equation}
Note that the new metric $g'$ is Lipschitz metric.
From the assumptions in (\ref{fffw}) and the even extension, it follows that
\begin{equation}
\left \{\begin{array}{lll}
\|\bar b\|_{W^{1, \infty}(\overline{\mathcal{M}})}\leq C\lambda, \medskip\\
\|\bar q\|_{W^{1, \infty}(\overline{\mathcal{M}})}\leq C\lambda^2.
\label{core}
\end{array}
\right.
\end{equation}

 By a standard regularity
argument for dealing with Lipschitz metrics in \cite{DF2} and \cite{AKS}, we have
established some quantitative Carleman inequality  in \cite{Zh}
 for the general second order elliptic
equation (\ref{star}). See also e.g. \cite{BC} for similar estimates
for smooth manifolds. The quantitative Carleman estimate inequality is stated as follows.

\begin{lemma}
There exists positive constants $\epsilon_0$ and $C$ such that for 
any $u\in C^{\infty}_{0}\Big(\mathbb B(p, \epsilon_0)\backslash
\mathbb B(p, \epsilon_1)\Big)$, and  $\beta>C(1+\|\bar b\|_{W^{1,
\infty}}+\|\bar q\|^{1/2}_{W^{1, \infty}})$, one has
\begin{equation}
\int r^4 e^{2\beta \psi(r)}|\triangle_{g'} u+\bar b \cdot
\nabla_{g'} u+ \bar q u|^2\, dvol\geq C \beta^3\int r^{\eps}
e^{2\beta\psi(r)} u^2 \,dvol, \label{cca}
\end{equation}
where $\psi(r)=-\ln r(x)+r^\eps(x)$, $r(x)$ is the geodesic
distance from $x$ to $p$, and $0<\eps<1$ is some fixed
constant. \label{carl}
\end{lemma}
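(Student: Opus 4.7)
The plan is to establish the Carleman estimate by the standard conjugation-and-commutator scheme, adapted to the Lipschitz metric $g'$ on the double manifold and to the refined weight $\psi(r) = -\ln r + r^\epsilon$, and then absorb the first-order and zeroth-order perturbations $\bar b \cdot \nabla_{g'} u$ and $\bar q u$ using the hypothesis on $\beta$. The first step would be to work in geodesic polar coordinates $(r,\theta)$ centered at $p$, using a small enough neighborhood that the exponential map based at $p$ is a bi-Lipschitz diffeomorphism. In these coordinates $\triangle_{g'}$ takes the form
\[
\triangle_{g'} = \partial_r^2 + \frac{n-1}{r}\partial_r + \frac{1}{r^2}\triangle_\theta + E,
\]
where $\triangle_\theta$ is (close to) the round Laplacian on $S^{n-1}$ and $E$ is a first-order error whose coefficients are Lipschitz in $r$, bounded uniformly by a constant depending only on $(\overline{\mathcal{M}}, g')$.

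Next I would perform the change of variable $t = -\ln r$ and set $v(t,\theta) = e^{-(n-2)t/2}\, u(e^{-t},\theta)$, which turns the annulus $\{\epsilon_1 < r < \epsilon_0\}$ into a slab in the cylinder $\mathbb{R}\times S^{n-1}$ and eliminates the first-order radial term. The weight becomes $\Psi(t) = t + e^{-\epsilon t}$, for which $\Psi'(t) = 1 - \epsilon e^{-\epsilon t}$ and $\Psi''(t) = \epsilon^2 e^{-\epsilon t} > 0$. Conjugating by $e^{\beta\Psi}$ and setting $w = e^{\beta\Psi}v$, one obtains a new operator $P_\beta = e^{\beta\Psi}\triangle_{g'} e^{-\beta\Psi}$ which splits into a symmetric part $S_\beta$ and an antisymmetric part $A_\beta$ with respect to $L^2(dt\, d\theta)$. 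Then the standard identity
\[
\|P_\beta w\|_{L^2}^2 = \|S_\beta w\|_{L^2}^2 + \|A_\beta w\|_{L^2}^2 + \langle [S_\beta, A_\beta] w, w\rangle
\]
reduces the estimate to showing the pseudoconvexity inequality $\langle [S_\beta, A_\beta] w, w\rangle \geq c\beta^3 \int e^{-\epsilon t} w^2$. The term $r^\epsilon$ on the right-hand side of \eqref{cca} comes precisely from $e^{-\epsilon t}$, and the gain of $\beta^3$ (rather than $\beta$) arises from the positivity of $\Psi''$; this is where the $r^\epsilon$ correction to the logarithmic weight is needed. The Lipschitz regularity of the metric is accommodated as in \cite{DF1}: only first derivatives of the metric coefficients appear in the commutator $[S_\beta, A_\beta]$, and their $L^\infty$ bounds suffice to control the error terms once $\beta$ is sufficiently large.

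Having obtained the pure-Laplacian estimate
\[
\int r^4 e^{2\beta\psi}|\triangle_{g'} u|^2 \geq c\beta^3 \int r^\epsilon e^{2\beta\psi} u^2 + c\beta \int r^{2+\epsilon} e^{2\beta\psi} |\nabla_{g'} u|^2,
\]
I would absorb the lower-order terms by Cauchy--Schwarz and the elementary bounds $r^4 \leq r^\epsilon$ and $r^4 \leq r^{2+\epsilon}$ on a small neighborhood, which yield
\[
\int r^4 e^{2\beta\psi}|\bar b \cdot \nabla_{g'} u|^2 \leq C\|\bar b\|_\infty^2 \int r^{2+\epsilon} e^{2\beta\psi}|\nabla_{g'} u|^2,
\]
\[
\int r^4 e^{2\beta\psi}|\bar q u|^2 \leq C\|\bar q\|_\infty^2 \int r^\epsilon e^{2\beta\psi} u^2.
\]
Choosing $\beta > C(1 + \|\bar b\|_{W^{1,\infty}} + \|\bar q\|_{W^{1,\infty}}^{1/2})$ makes both error contributions strictly smaller than half of the corresponding terms on the right of the Laplacian estimate, and \eqref{cca} follows.

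The main obstacle is the pseudoconvexity computation on the Lipschitz double manifold: one must track how the non-smoothness of $g'$ across $\partial\mathcal{M}$ interacts with the commutator $[S_\beta, A_\beta]$ and verify that all error terms generated by differentiating the metric coefficients are bounded uniformly by quantities absorbed into the positive principal part, uniformly for $r$ in the annulus. Once this is done carefully, the remaining steps are routine integration by parts and Cauchy--Schwarz.
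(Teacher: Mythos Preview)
Your overall architecture---pass to geodesic polar coordinates, logarithmic radial variable, conjugate by $e^{\beta\Psi}$, and exploit the positivity of $\Psi''$ coming from the $r^\epsilon$ correction---is exactly the scheme the paper uses (their Appendix sets $t=\ln r$, $u=e^{-\beta\psi}v$, and decomposes $\|\mathcal L_\beta v\|^2_\phi$ into pieces $\mathcal A'_1,\mathcal A'_2,\mathcal A'_3$ that play the role of your $\|S_\beta w\|^2$, $\|A_\beta w\|^2$, and the commutator). The handling of the Lipschitz metric via \cite{DF1,AKS} is also the same.

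There is, however, a genuine gap in your final absorption step. You propose to prove the pure-Laplacian Carleman inequality first and then absorb $\bar b\cdot\nabla_{g'}u$ and $\bar qu$ by Cauchy--Schwarz. But your own pure-Laplacian estimate carries only a factor $c\beta$ in front of the gradient term and $c\beta^3$ in front of the zeroth-order term, while the perturbations you must subtract are of size $C\|\bar b\|_\infty^2$ and $C\|\bar q\|_\infty^2$ respectively. Absorption therefore forces
\[
\beta \;>\; C\|\bar b\|_\infty^2 \qquad\text{and}\qquad \beta^3 \;>\; C\|\bar q\|_\infty^2,
\]
i.e.\ $\beta>C\|\bar b\|^2$ and $\beta>C\|\bar q\|^{2/3}$, which is strictly stronger than the hypothesis $\beta>C(1+\|\bar b\|_{W^{1,\infty}}+\|\bar q\|_{W^{1,\infty}}^{1/2})$ stated in the lemma. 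In the intended application $\|\bar b\|\sim\lambda$ and $\|\bar q\|\sim\lambda^2$, so your argument would only deliver $\beta>C\lambda^2$ rather than $\beta>C\lambda$, and every downstream doubling constant would degrade from $e^{C\lambda}$ to $e^{C\lambda^2}$.

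The paper avoids this by \emph{not} separating the lower-order terms: it conjugates the full operator $\triangle_{g'}+\bar b\cdot\nabla_{g'}+\bar q$ and groups $e^{2t}\bar b_t$ with $2\beta\phi'$ and $e^{2t}\bar q$ with $\beta^2\phi'^2$ inside the decomposition. Under $\beta>C\|\bar b\|$ and $\beta^2>C\|\bar q\|$ these are small perturbations of the leading coefficients (linear comparison), so no quadratic loss occurs; this is also where the $W^{1,\infty}$ norms are actually used, since integration by parts in the cross terms produces $\partial_t\bar b$, $\partial_i\bar b$, $\partial_i\bar q$. To repair your proof you should likewise fold $\bar b$ and $\bar q$ into the conjugated operator before the symmetric/antisymmetric split, rather than treating them as an afterthought.
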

Since some arguments are used in the  proof of Proposition \ref{pro2} later, we include the major arguments of the proof of Lemma \ref{carl} in the appendix,
By the Carleman estimates in Lemma \ref{carl}, we can derive a Hadamard's
three-ball theorem. Based on a propagation of smallness argument,
 we have obtained
 the following doubling inequality in $\overline{\mathcal{M}}$ in \cite{Zh1}.
\begin{proposition}
There exist positive constants $r_0$ and $C$  depending only on
$\overline{\mathcal{M}}$ such that for any $0<r<r_0$ and any $p\in
\overline{\mathcal{M}}$, there holds
\begin{align}
\|u\|_{L^2(\mathbb B(p, \,{2r}))}\leq e^{C\lambda}\|u\|_{L^2(\mathbb
B(p,\, {r}))} \label{ddou}
\end{align}
for any solutions of (\ref{star}). \label{proo}
\end{proposition}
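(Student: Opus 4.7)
My plan is to derive (\ref{ddou}) in two stages, following the Donnelly--Fefferman blueprint: first a Hadamard three-ball inequality derived directly from the Carleman estimate of Lemma \ref{carl}, and then an iteration that converts the three-ball inequality into a genuine doubling inequality at every scale $r<r_0$.

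For the three-ball step, fix $p\in\overline{\mathcal{M}}$ and three concentric radii $r_1<r_2<r_3\leq r_0$ with fixed ratios. Let $\eta$ be a smooth radial cutoff equal to $1$ on $\{r_1\leq r(x)\leq 2r_2\}$ and supported in $\{r_1/2\leq r(x)\leq r_3\}$, so that $\eta u\in C_0^{\infty}\bigl(\mathbb B(p,r_3)\setminus\mathbb B(p,r_1/2)\bigr)$. I would apply (\ref{cca}) to $\eta u$; because $u$ satisfies (\ref{star}), the left-hand side reduces to expressions of the form $(\triangle_{g'}\eta)u$, $\nabla_{g'}\eta\cdot\nabla_{g'}u$, and $\bar b\cdot(\nabla_{g'}\eta)u$, all supported in the two transition annuli near $r_1$ and near $r_3$. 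Choosing $\beta$ proportional to $\lambda$ --- the smallest value permitted by the hypothesis of Lemma \ref{carl} in view of (\ref{core}) --- and exploiting the monotonicity of $r\mapsto e^{2\beta\psi(r)}=r^{-2\beta}e^{2\beta r^{\eps}}$, one compares the weight values on the three annuli and absorbs gradient factors via an interior Caccioppoli estimate for (\ref{star}). This yields
\begin{equation}
\|u\|_{L^2(\mathbb B(p,r_2))}\leq e^{C\lambda}\,\|u\|_{L^2(\mathbb B(p,r_1))}^{\theta}\,\|u\|_{L^2(\mathbb B(p,r_3))}^{1-\theta}\nonumber
\end{equation}
with $\theta\in(0,1)$ depending only on $r_1/r_2$ and $r_2/r_3$; the ratios can be adjusted so that $\theta>1/2$.

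For the iteration step, I apply the three-ball inequality with $r_1=r$, $r_2=2r$, $r_3=4r$. Setting $h(r)=\|u\|_{L^2(\mathbb B(p,2r))}/\|u\|_{L^2(\mathbb B(p,r))}$, dividing both sides by $\|u\|_{L^2(\mathbb B(p,r))}$, and using $\|u\|_{L^2(\mathbb B(p,4r))}=h(2r)h(r)\,\|u\|_{L^2(\mathbb B(p,r))}$, the three-ball inequality rearranges to $h(r)\leq e^{C\lambda/\theta}\,h(2r)^{(1-\theta)/\theta}$. Since $(1-\theta)/\theta<1$, iterating this inequality upward from the variable scale $r$ to the reference scale $r_0$ and summing the geometric series in the exponent gives $h(r)\leq e^{C\lambda}\,h(r_0)^{((1-\theta)/\theta)^N}$ with $N\simeq\log_2(r_0/r)$. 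A one-time propagation of smallness along a finite chain of overlapping balls of the fixed radius $r_0$ that covers $\overline{\mathcal{M}}$ bounds $h(r_0)\leq e^{C\lambda}$, and then $\bigl(e^{C\lambda}\bigr)^{((1-\theta)/\theta)^N}$ contributes at most $e^{C\lambda}$, producing (\ref{ddou}).

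The principal obstacle is to keep the $\lambda$-dependence exactly at $e^{C\lambda}$ throughout. The coupling $\beta\simeq\lambda$ in the Carleman step is forced by (\ref{core}) and is precisely what generates the prefactor $e^{C\lambda}$ in the three-ball inequality. In the iteration step, the geometric series $\sum_{k\geq 0}\bigl((1-\theta)/\theta\bigr)^k$ converges only because $\theta$ can be taken above $1/2$, so the cumulative contribution remains $e^{C\lambda}$ regardless of how small $r$ is. The Lipschitz regularity of the induced metric $g'$ on the doubled manifold does not interfere with the iteration, having been absorbed in the proof of Lemma \ref{carl} via the standard regularization procedure from \cite{DF1}.
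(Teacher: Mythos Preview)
Your strategy---Carleman estimate (Lemma~\ref{carl}) $\Rightarrow$ Hadamard three-ball inequality $\Rightarrow$ propagation of smallness at a fixed scale $\Rightarrow$ doubling at all scales---is exactly the route the paper indicates; Proposition~\ref{proo} is not reproved here but quoted from \cite{Zh1}, and the paper's one-line summary (``By the above Carleman estimates, we can derive a Hadamard's three-ball theorem. Based on a propagation of smallness argument\ldots'') is precisely your plan.

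There is, however, a genuine slip in your iteration. After optimizing over $\beta$, the three-ball exponent is
\[
\theta=\frac{\psi(r_2)-\psi(r_3)}{\psi(r_1)-\psi(r_3)},
\]
which for $\psi(r)=-\ln r+r^{\eps}$ and small $r$ equals $\ln(r_3/r_2)\big/\ln(r_3/r_1)$ to leading order. With your dyadic choice $r_1=r$, $r_2=2r$, $r_3=4r$ this gives $\theta=\tfrac12$ (and the $r^{\eps}$ correction in fact pushes it slightly \emph{below} $\tfrac12$), so $(1-\theta)/\theta\geq 1$ and the geometric series in your exponent diverges: the iteration then yields only $h(r)\le e^{C\lambda N}$ with $N\simeq\log_2(r_0/r)$, which is not uniform in $r$. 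You correctly flagged that ``the ratios can be adjusted so that $\theta>1/2$,'' but did not actually adjust them in the iteration. The repair is routine---take for instance $r_3=8r$, giving $\theta\to\tfrac23$ and a convergent recursion (now a two-step recursion for $h$)---or, alternatively, bypass the dyadic iteration by applying the three-ball estimate once with the outer radius held at the fixed reference scale and invoking your propagation-of-smallness bound $\|u\|_{L^2(\overline{\mathcal{M}})}\le e^{C\lambda}\|u\|_{L^2(\mathbb B(p,r_0))}$ to control the outer factor. Either way the argument closes, but as written it does not.
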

From the proposition, it is easy to see that the doubling inequality
for Steklov eigenfunctions as (\ref{doub}) holds in $\mathcal{M}$ if
$\mathbb B(p, {2r})\subset\mathcal{M}$, since $\varrho(x)$ is a bounded function. By standard elliptic
estimates, the $L^\infty$ norm of doubling inequality
\begin{align}
\|u\|_{L^\infty(\mathbb B(p, \,{2r}))}\leq
e^{C\lambda}\|u\|_{L^\infty(\mathbb B(p,\, {r}))} \label{kaoni}
\end{align}
holds, which also implies that
$$
\|e_\lambda\|_{L^\infty(\mathbb B(p, \,{2r}))}\leq
e^{C\lambda}\|e_\lambda\|_{L^\infty(\mathbb B(p,\, {r}))}.
$$
Next we will establish a stronger Carleman inequality than that in
Lemma \ref{carl} with weight function $\exp\{\beta \psi(x)\}$ following from \cite{DF3}, where
the function $\psi$ satisfies some convexity properties. Choosing a
fixed number $\eps$ such that $0<\eps<1$ and $T_0<0$, we define the
function $\phi$ on $(-\infty, \ T_0]$ by $\phi(t)=t- e^{\eps t}$. If
$|T_0|$ is sufficiently large, the function $\phi(t)$ satisfies the
following properties
\begin{eqnarray}
1-\eps e^{\eps T_0}\leq \phi'(t)\leq 1, \label{ass1}\\
\lim\limits_{t\to-\infty}\frac{-\phi''(t)}{e^t}=+\infty.
\label{ass2}
\end{eqnarray}
Let $\psi(x)=-\phi\big(\ln r(x)\big)$, where $r(x)=d(x,\, p)$ is
geodesic distance between $x$ and $p$. The stronger Carleman estimate is stated as follows.

\begin{proposition} There exist positive constants $h$, $C_0$ and $C$ such that for
any $u\in C^{\infty}_{0}\Big(\mathbb B(p, h)\backslash \mathbb B(p,
\delta)\Big)$, and $\beta>C_0(1+\|\bar b\|_{W^{1, \infty}}+\|\bar
q\|^{1/2}_{W^{1, \infty}})$, one has
\begin{eqnarray}
\int_{\mathbb \mathbb B(p, h)} r^4 e^{2\beta \psi}|\triangle u+\bar
b\cdot \nabla u+\bar q u|^2 \, dvol &\geq& C \beta^3 \int_{\mathbb
B(p, h)} r^\epsilon e^{2\beta\psi} u^2 dvol \nonumber \\&+&C\beta^4
\int_{\mathbb B(p, \delta(1+\frac{C}{\beta}))} e^{2\beta \psi} u^2
dvol. \label{comb1}
\end{eqnarray}
 \label{pro2}
\end{proposition}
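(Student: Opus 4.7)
The plan is to adapt the refined Donnelly--Fefferman Carleman estimate of \cite{DF2} to the elliptic operator with Lipschitz coefficients in (\ref{star}), exploiting the extra convexity built into $\phi(t)=t-e^{\eps t}$ through (\ref{ass1})--(\ref{ass2}). The set-up copies the proof of Lemma \ref{carl}: pass to polar geodesic coordinates $(r,\theta)$ around $p$, substitute $t=\ln r$, and conjugate $L=\triangle_{g'}+\bar b\cdot\nabla_{g'}+\bar q$ through $v=r^{(n-2)/2}e^{\beta\psi}u$. Modulo curvature errors and the perturbations from $\bar b,\bar q$, the conjugated operator $P_\beta$ acting on $v(t,\theta)$ is second order in $\partial_t$ with coefficients polynomial in $\beta$ and $\phi'(t)$, plus the angular Laplacian.

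The key identity is the splitting $P_\beta=S+A$ into self-adjoint and anti-self-adjoint parts with respect to $dt\,d\theta$, from which
\[
\|P_\beta v\|^2=\|Sv\|^2+\|Av\|^2+\langle [S,A]v,v\rangle.
\]
A direct computation of $[S,A]$ produces a positive contribution proportional to $\beta^3(-\phi''(t))=\beta^3\eps^2 e^{\eps t}$, plus controllable error terms. Undoing the substitution and absorbing the Jacobian $r^n\,dr\,d\theta$ converts this into the first right-hand term $C\beta^3\int r^\eps e^{2\beta\psi}u^2\,dvol$, which is already the content of Lemma \ref{carl}.

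The extra $\beta^4$ localized term is where condition (\ref{ass2}) is exploited. Since $-\phi''(t)/e^t\to+\infty$ as $t\to-\infty$, on an interval $t\in[t_0,t_0+c]$ with $t_0=\ln\delta$ and $\delta$ small, one has $-\phi''(t)\geq M e^t$ with $M$ as large as one wishes; in particular, the $e^t$-scaled error terms in the commutator computation that were previously absorbed into the $\beta^3$ bound can now be retained and combined with an additional factor of $\beta$ coming from the localization near $t_0$. Because $\ln(1+C/\beta)\sim C/\beta$, this thin inner annulus corresponds to exactly $\{\delta\leq r\leq\delta(1+C/\beta)\}$, and the extra positive term takes the form $C\beta^4\int_{\mathbb B(p,\delta(1+C/\beta))}e^{2\beta\psi}u^2\,dvol$, with no $r^\eps$ weight.

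The final step absorbs the first- and zeroth-order perturbations $\bar b\cdot\nabla u$ and $\bar q u$ using the quantitative hypothesis $\beta>C(1+\|\bar b\|_{W^{1,\infty}}+\|\bar q\|^{1/2}_{W^{1,\infty}})$ together with (\ref{core}): the $\bar b$ term is Cauchy--Schwarzed against the gradient estimate already controlled by $\|Av\|^2$, while $\bar q u$ is subsumed directly into the $\beta^3\int r^\eps e^{2\beta\psi}u^2$ contribution. The principal obstacle I anticipate is the bookkeeping of the curvature error terms generated when the Euclidean Laplacian is replaced by $\triangle_{g'}$ in the Lipschitz metric $g'$, but since $\psi$ depends only on the scalar geodesic distance $r(x)$, the same reflection/doubling argument underlying Lemma \ref{carl} and Proposition \ref{proo} lets these errors be treated as lower-order perturbations that are swallowed by the $\beta^3$ and $\beta^4$ main terms.
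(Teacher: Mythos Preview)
Your outline for the $\beta^3$ term is fine and matches Lemma \ref{carl}: the conjugation in $t=\ln r$, the self-adjoint/anti-self-adjoint splitting, and the commutator computation are equivalent to the decomposition the paper carries out in the Appendix. The handling of the $\bar b,\bar q$ perturbations and the Lipschitz-metric errors is also correct in spirit.

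The gap is in the $\beta^4$ localized term. Condition (\ref{ass2}) is \emph{not} the source of the extra factor of $\beta$; it is already fully used in Lemma \ref{carl} to make the $\beta^3|\phi''|$ commutator term dominate the $e^t$-scaled curvature errors. Your sentence ``an additional factor of $\beta$ coming from the localization near $t_0$'' does not work: restricting an integral to a $t$-interval of width $C/\beta$ costs, rather than gains, a power of $\beta$.

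The paper obtains the $\beta^4$ term by a different device (following \cite{DF2}). One inserts an auxiliary scalar function $g(t)$ into the decomposition, writing
\[
\mathcal{A}=\mathcal{A}_1+\mathcal{A}_2+\mathcal{A}_3+\mathcal{A}_4,\qquad \mathcal{A}_4=-\beta^2\|gv\|_\phi^2,
\]
with $\beta g v$ added and subtracted inside the cross term $\mathcal{A}_3$. After integration by parts in $t$, the requirement that the resulting lower bound be $\beta^4\int\varphi(\beta(t-t_\ast))\,v^2$ for a prescribed bump $\varphi$ becomes a first-order ODE for $g$ (equation (\ref{ast}) in the paper), with $g\equiv 0$ for $t\geq t_\ast$. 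After the rescaling $g=\beta G$, $z=\beta(t-t_\ast)$, standard ODE existence solves this on $-C_1\leq z\leq 0$, i.e.\ on a $t$-interval of width $C_1/\beta$. Since $\varphi>C_4$ on a subinterval, this yields
\[
C\mathcal{A}\geq \beta^4\int_{t_\ast-C_3/\beta<t<t_\ast-C_2/\beta} v^2,
\]
and choosing $e^{t_\ast}=r_\ast=\delta/(1-C_6/\beta)$ converts this to the annulus $\delta<r<\delta(1+C_8/\beta)$. The $\beta^4$ thus comes from the ODE forcing $\beta^2\varphi(\beta(t-t_\ast))$ combined with the $\beta^2$ already present in the squared terms, not from any property of $\phi''$. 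Your proposal is missing this mechanism entirely.
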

\begin{proof} By the standard argument in dealing with Lipschitz Riemannian manifold in \cite{DF2} and \cite{AKS},
using a conformal change, we can still use  polar geodesic coordinates $ (r, \omega)$. The change only results in the change of $C$ in the norm estimates of coefficient functions in (\ref{core}). The geodesic balls are comparable under the conformal change of the different metrics. For simplicity, we still keep the notations in (\ref{star}).
We introduce the polar geodesic coordinates $ (r, \omega)$
near $p$. Following the Einstein notation, for any $v\in C^\infty$, we denote the
Laplace-Beltrami operator as
$$ r^2 \triangle v=r^2 \partial^2_r v+ r^2\big( \partial_r \ln(\sqrt{\gamma})+\frac{n-1}{r}\big) \partial_r v+
\frac{1}{\sqrt{\gamma}}
\partial_i\big(\sqrt{\gamma}\gamma^{ij}\partial_j v\big),
$$
where $\partial_i =\frac{\partial}{\partial \omega_i}$ and $
\gamma_{ij}(r, \omega)$ is a metric on $S^{n-1}$,
$\gamma=\det(\gamma_{ij})$. One can check that, for $r$ small
enough,
\begin{equation}
\left \{ \begin{array}{lll}
\partial_r(\gamma_{ij})\leq C (\gamma_{ij}) \ \ \mbox{in term of
tensors},\medskip \\
|\partial_r(\gamma)|\leq C, \label{gamma} \medskip\\
C^{-1} \leq \gamma \leq C,
\end{array}
\right.
\end{equation}
where $C$ depends on $\mathcal{M}$.
Set a new coordinate as $\ln r=t$. Using this new
coordinate,
\begin{equation} e^{2t} \triangle v= \partial^2_t v+ ( n-2+ \partial_t\ln\sqrt{\gamma}) \partial_t v+
\frac{1}{\sqrt{\gamma}}
\partial_i\big(\sqrt{\gamma}\gamma^{ij}\partial_j v\big)
\label{new}
\end{equation}
and
$$ e^{2t} \bar b=e^{2t}\bar b_t \partial_t+ e^{2t} \bar b_i \partial_i.          $$
Since $u$ is supported in a small neighborhood, then $u$ is
supported in $(-\infty, \ T_0)\times S^{n-1}$ with $T_0<0$ and $|T_0|$ large
enough. Under this new coordinate, the condition (\ref{gamma})
becomes
\begin{equation}
\left \{ \begin{array}{lll}
\partial_t(\gamma_{ij})\leq C e^t (\gamma_{ij}) \ \ \mbox{in term of
tensors},\medskip \\
|\partial_t(\gamma)|\leq C e^t, \label{gamma1} \medskip\\
C^{-1} \leq \gamma \leq C.
\end{array}
\right.
\end{equation}
Let $$u=e^{-\beta \psi(x)} v.$$ Define the conjugate operator,
\begin{align}\mathcal {L}_\beta (v)&=r^2  e^{\beta\psi(x)} \triangle (e^{-\beta \psi(x)}v)+ r^2 e^{\beta \psi(x)} \bar
b\cdot \nabla (e^{-\beta \psi(x) }v )+  r^2\bar q v  \nonumber \\
&=e^{2t} e^{-\beta\phi(t)} \triangle (e^{\beta \phi(t)}v)+e^{2t} e^{-\beta \phi(t)} \bar
b\cdot \nabla (e^{\beta \phi(t) }v ) +e^{2t} \bar q v.
\end{align}
From (\ref{new}), straightforward calculations show that \begin{eqnarray}\mathcal
{L}_\beta (v)&=&\partial^2_t v+\big(2\beta \phi'+e^{2t} {\bar
b}_t+(n-2)+\partial_t \ln\sqrt{\gamma}\big)\partial_t v+ e^{2t} \bar
b_i\partial_i v \nonumber \\
&+&\big(\beta^2\phi'^2+\beta\phi'{\bar b}_t e^{2t}+\beta
\phi''+(n-2)\beta \phi'+\beta \partial_t \ln \sqrt{\gamma}\phi'\big)
v+\triangle_{\omega} v+ e^{2t} \bar q v,
\end{eqnarray}
where $$\triangle_{\omega} v=\frac{1}{\sqrt{\gamma}}\partial_i
(\sqrt{\gamma} \gamma^{ij}\partial_j v).$$ We will work in the
following $L^2$ norm
$$ \|v\|^2_{\phi}=\int_{(-\infty, \ T_0]\times S^{n-1}} |v|^2\sqrt{\gamma} \phi'^{-3} \, dt d\omega,           $$
where $d\omega$ is measure on $S^{n-1}$. By the triangle inequality, we have
$$ \|\mathcal{L}_\beta (v)\|^2_\phi\geq \frac{1}{2}\mathcal{A}-\mathcal{B},              $$
where \begin{eqnarray} \mathcal{A}=\|
\partial^2_t v&+& \triangle_{\omega} v +\big(2\beta \phi'+e^{2t} {\bar
b}_t\big)\partial_t v+ e^{2t} \bar
b_i\partial_i v \nonumber \\
&+&\big(\beta^2\phi'^2+\beta\phi'{\bar b}_t e^{2t}+(n-2)\beta \phi'+
e^{2t} \bar q \big) v\|^2_{\phi} \end{eqnarray} and \begin{equation}
\mathcal{B}=\|\beta \phi'' v+\beta \partial_t \ln\sqrt{\gamma} \phi'
v+(n-2)\partial_t v+\partial_t \ln\sqrt{\gamma}\partial_t
v\|^2_{\phi}.
\end{equation}
By integration by parts argument, we can absorb
$\mathcal{B}$ into $\mathcal{A}$. It holds that
\begin{equation}
\|\mathcal{L}_\beta (v)\|^2_\phi\geq
\frac{1}{4}\mathcal{A}. \label{four}
\end{equation}
We can also obtain a lower bound for $\mathcal{A}$,
\begin{eqnarray}
C\mathcal{A}&\geq& \beta^3\int |\phi''||v|^2
\phi'^{-3}\sqrt{\gamma}\,dt d \omega +\beta \int |\phi''||D_\omega
v|^2
\phi'^{-3}\sqrt{\gamma}\,dt d \omega \nonumber \\
&&+\beta\int |\partial_t v|^2 \phi'^{-3}\sqrt{\gamma}\,dt d \omega,
\label{doit}
\end{eqnarray}
where $|D_{\omega} v|^2$ stands for
$$|D_{\omega} v|^2=\gamma^{ij}\partial_i v\partial_j v.   $$
For the completeness of the presentation, we include the proof of (\ref{four}) and (\ref{doit}) in the Appendix.

We also want to find another
refined lower bound for $\mathcal{A}$. We write $\mathcal{A}$ as
\begin{align} \mathcal{A}=\mathcal{A}_1+\mathcal{A}_2+\mathcal{A}_3+\mathcal{A}_4,    \label{fuckz}  \end{align}
where $$\mathcal{A}_1=\|\partial^2_t v
+\big(\beta^2\phi'^2+\beta\phi'{\bar b}_t e^{2t}+(n-2)\beta \phi'+
e^{2t} \bar q \big) v+ \triangle_{\omega} v\|^2_{\phi} $$
and
$$\mathcal{A}_2=\|\big(2\beta \phi'+e^{2t} {\bar
b}_t\big)\partial_t v+e^{2t}\bar{b}_i\partial_i v+\beta g
v\|^2_{\phi}
$$
and
\begin{eqnarray}\mathcal{A}_3=2< \partial^2_t v+
\big(\beta^2\phi'^2+\beta\phi'{\bar b}_t e^{2t}+(n-2)\beta \phi'+
e^{2t} \bar q \big) v+\triangle_\omega v-\beta g v,\nonumber \\
\big(2\beta \phi'+e^{2t} {\bar b}_t\big)\partial_t
v+e^{2t}\bar{b}_i\partial_i v>_\phi \nonumber
\end{eqnarray}
and
$$\mathcal{A}_4=-\beta^2 \|gv\|^2_{\phi},
$$
 and $g(t)$ is a function to be determined. We continue to break
$\mathcal{A}_3$ down as\begin{equation}\mathcal{A}_3=\mathcal
{I}_1+\mathcal{I}_2, \label{brea}\end{equation}where
\begin{eqnarray}\mathcal {I}_1=2<\partial^2_t v+
\big(\beta^2\phi'^2+\beta\phi'{\bar b}_t e^{2t}+(n-2)\beta \phi'+
e^{2t} \bar q \big) v+\triangle_\omega v,
\nonumber \\
\big(2\beta \phi'+e^{2t} {\bar b}_t\big)\partial_t
v+e^{2t}\bar{b}_i\partial_i v>_\phi \nonumber
\end{eqnarray}
and
$$ \mathcal{I}_2=-2<\beta g v, \   \big(2\beta \phi'+e^{2t} {\bar b}_t\big)\partial_t
v+e^{2t}\bar{b}_i\partial_i v>_\phi. $$ Performing the integration by
part arguments  shows that
\begin{eqnarray}
\mathcal {I}_1&\geq& 3\beta\int |\phi''||D_{\omega} v|^2
\phi'^{-3}\sqrt{\gamma}\,dt d \omega -c\beta^3\int e^{t}|v|^2
\phi'^{-3}\sqrt{\gamma}\,dt d \omega \nonumber \\
&&-c\beta\int |\phi''||\partial_t v|^2 \phi'^{-3}\sqrt{\gamma}\,dt d \omega
-c\beta^2\int |\phi''|| v|^2 \phi'^{-3}\sqrt{\gamma}\,dt d \omega.
\label{kao1}
\end{eqnarray}

 From (\ref{doit})  and (\ref{kao1}), it follows that
 $$\mathcal{I}_1+C'\mathcal{A}\geq 0  $$
 for some positive constant $C'$. That is,
\begin{equation}
\mathcal{I}_1\geq -C'\mathcal{A}. \label{kao}
\end{equation}

 We compute $\mathcal{I}_2$. Applying the integrating by parts gives that
\begin{align}
\mathcal{I}_2 &=\int \beta g'(2\beta \phi'+e^{2t} {\bar b}_t) v^2
 \phi'^{-3}\sqrt{\gamma} \, dt d\omega \nonumber \medskip \\&+\int \beta g(2\beta \phi''+2e^{2t} {\bar b}_t+e^{2t}\partial_t{\bar
b}_t) v^2 \phi'^{-3}\sqrt{\gamma} \, dt d\omega\nonumber \medskip \\
&-\int 3 \beta g(2\beta \phi'+ e^{2t} {\bar b}_t){\phi'}^{-1}
\phi'' v^2\phi'^{-3}\sqrt{\gamma} \, dt d\omega \nonumber \medskip
\\&+\int \beta g(2\beta
\phi'+e^{2t} {\bar b}_t)\partial_t\ln\sqrt{\gamma}v^2\phi'^{-3}\sqrt{\gamma} \, dt d\omega \nonumber \\
\medskip &+\int \beta g e^{2t}(\partial_i \bar{b}_i +
\bar{b}_i
\partial_i \ln\sqrt{\gamma})v^2\phi'^{-3}\sqrt{\gamma} \, dt
d\omega. \nonumber \end{align} Combining terms in the later
identity yields that
\begin{align}
\mathcal{I}_2 &=\beta^2 \int \Big\{ g'(2\phi'+\frac{e^{2t}{\bar
b}_t}{\beta})+g\Big(2 \phi''+ \frac{ 2e^{2t}{\bar
b}_t+e^{2t}\partial_t \bar{b}_t}{\beta} -6
\phi''-3\frac{e^{2t}\bar{b}_t {\phi'}^{-1} \phi''}{\beta} \nonumber
\medskip\\ &+ (2\phi'+\frac{e^{2t}\bar
b_t}{\beta})\partial_t\ln\sqrt{\gamma} + \frac{ e^{2t}\partial_i
\bar{b}_i+e^{2t}\bar{b}_i\partial_i\ln\sqrt{\gamma}}{\beta}\Big)\Big\}
v^2 \phi'^{-3} \sqrt{\gamma}\, dt d\omega.\label{nie}
\end{align}
Since $\mathcal{A}_1$ and $\mathcal{A}_2$ are nonnegative, from (\ref{fuckz}),
(\ref{brea}) and (\ref{kao}), we have
\begin{align*}
\mathcal{A}&\geq \mathcal{I}_1+\mathcal{I}_2+ \mathcal{A}_4 \nonumber \\
&\geq -C'\mathcal{A}+\mathcal{I}_2+ \mathcal{A}_4.
\end{align*}
By (\ref{nie}), we have a lower bound of $\mathcal{A}$ as
\begin{eqnarray}
C\mathcal{A}&\geq& \beta^2 \int \Big\{\Big[ g'(2\phi'+\frac{e^{2t}{\bar b}_t}{\beta})+g\Big( \frac{ 2e^{2t}{\bar b}_t+e^{2t}\partial_t \bar{b}_t}{\beta}
 -4\phi''-3\frac{e^{2t}\bar{b}_t {\phi'}^{-1}
\phi''}{\beta}\nonumber \\&& +(2\phi'+\frac{e^{2t}\bar
b_t}{\beta})\partial_t\ln\sqrt{\gamma}+ \frac{ e^{2t}\partial_i
\bar{b}_i+e^{2t}\bar{b}_i\partial_i\ln\sqrt{\gamma}}{\beta}\Big)\Big]-g^2\Big\}
v^2 \phi'^{-3} \sqrt{\gamma}\, dt d\omega. \label{lead}
\end{eqnarray}
From the assumption (\ref{ass1}), we know $\phi'$ is close to $1$ as
$|T_0|$ is sufficiently large. By the assumption of $\bar b$ and the condition $\beta>C(1+\|\bar b\|_{W^{1, \infty}}+\|\bar
q\|^{1/2}_{W^{1, \infty}})$, it is
clear that $|\frac{e^{2t}\bar b_t}{\beta}|$ is small. Thus, the
condition
$$ 2\phi'+  \frac{e^{2t}\bar b_t}{\beta}>0        $$
holds. Let
\begin{eqnarray}
g'(2\phi'+\frac{e^{2t}{\bar b}_t}{\beta})+g\Big(\frac{ 2e^{2t}{\bar
b}_t+e^{2t}\partial_t \bar{b}_t}{\beta} -4
\phi''-3\frac{e^{2t}\bar{b}_t {\phi'}^{-1} \phi''}{\beta}\nonumber
+(2\phi'+\frac{e^{2t}\bar b_t}{\beta})\partial_t\ln\sqrt{\gamma}\\
+\frac{ e^{2t}\partial_i
\bar{b}_i+e^{2t}\bar{b}_i\partial_t\ln\sqrt{\gamma}}{\beta}\Big)-g^2=\beta^2
(2\phi'+\frac{e^{2t}{\bar b}_t}{\beta})\varphi(\beta (t-t_\ast)),
\label{ast}
\end{eqnarray}
where $\varphi(t)=0$ for $t\geq 0$, $\varphi(t)>0$ for $t<0$, and
$|t_\ast|$ is an arbitrary large number with $t_\ast<0$. We attempt to solve
(\ref{ast}) with $g=0$ for $t\geq t_\ast$. Making the change of
rescale, we have
$$ g=\beta G, \quad \quad z=\beta(t-t_\ast).     $$
Then (\ref{ast}) is transformed into an equation of the form
\begin{equation}
\left\{ \begin{array}{lll}
\frac{\partial G}{\partial z}= H_1(z)+ H_2(z) G+ H_3(z)G^2, \nonumber \medskip \\
G(0)=0,
\end{array}
\right.
\end{equation}
with $H_1, H_2$ and $H_3$ are uniformly bounded in $C^2$. Standard existence
theorem from ordinary differential equations shows a solution to
(\ref{ast}) for $-C_1\leq \beta (t-t_\ast)\leq 0$ with a fixed small
positive constant $C_1$. Then (\ref{ast}) can be solved for
$\frac{-C_1}{\beta}+t_\ast\leq t\leq t_\ast$. If we assume that
$\supp v\subset \{\frac{-C_1}{\beta}+t_\ast\leq t\leq T_0\} $ with
$T_0<0$, then (\ref{lead}) implies that
\begin{equation}
 C\mathcal{A}\geq \beta^4 \int (2\phi'+\frac{e^{2t}{\bar
b}_t}{\beta})\varphi(\beta (t-t_\ast))  v^2 \phi'^{-3}\sqrt{\gamma}
\, dt d\omega.
\end{equation}
There exist $0<-T_0<C_2<C_3<C_1$ such that
$$ \varphi(z)> C_4 \quad \quad \mbox{for} \ -C_3<z<-C_2       $$
and $C_4$ depends on $C_2$, $C_3$. It follows from the last
inequality that
\begin{equation}
C \mathcal{A}\geq C_4 \beta^4 \int_{
 t_\ast-\frac{C_3}{\beta}<t<t_\ast-\frac{C_2}{\beta}}  v^2 \phi'^{-3}\sqrt{\gamma} \, dt
d\omega.
\end{equation}
Since $r= e^{t}$ and recall that $u=e^{-\beta \psi(x)}v$, the
previous estimates yield that
\begin{equation}
 \mathcal{A}\geq C_5 \beta^4 \int_{
 t_\ast-\frac{C_3}{\beta}<\ln r <t_\ast-\frac{C_2}{\beta}}e^{2\beta \psi(x)} u^2 r^{-1}\phi'^{-3} \sqrt{\gamma}\,
 dr
d\omega.
\end{equation}
Set $e^{t_\ast}=r_\ast$. If $r_\ast \exp\{-\frac{C_3}{\beta}\}<r<
r_\ast \exp\{-\frac{C_2}{\beta}\}$, there exist positive constants
$C_6$ and  $C_7$ such that
$r_\ast(1-\frac{C_6}{\beta})<r<r_\ast(1-\frac{C_7}{\beta})$. Recall the estimates
(\ref{four}), it follows that
\begin{equation}
\|\mathcal{L}_\beta (v)\|^2_\phi\geq C_5
\beta^4\int_{r_\ast(1-\frac{C_6}{\beta})<r<r_\ast(1-\frac{C_7}{\beta})}
e^{2\beta \psi(x)} u^2 r^{-1}\phi'^{-3}\sqrt{\gamma}dr d\omega.
\end{equation}
Note that $\phi'$ is close to 1, we have
\begin{equation}
\|\mathcal{L}_\beta (v)\|^2\geq C_5 \beta^4
\int_{r_\ast(1-\frac{C_6}{\beta})<r<r_\ast(1-\frac{C_7}{\beta})}
e^{2\beta \psi(x)} u^2 \, d vol
\end{equation}
by a constant change of the value of $\beta$. Since $u\in
C^\infty_0\big(\mathbb B(p, h) \backslash
 \mathbb B(p, \delta)\big)$, choosing
$r_\ast=\frac{\delta}{1-\frac{C_6}{\beta}}$, we have
\begin{equation}
\|r^2 e^{\beta \psi}|\triangle u+\bar
b\cdot \nabla u+\bar q u|\|^2  \geq C_5 \beta^4
\int_{\delta<r<\delta(1+\frac{C_8}{\beta})} e^{2\beta \psi(x)} u^2
\, d vol. \label{how1}
\end{equation}
From Lemma \ref{carl}, we have established that
\begin{equation}
\|r^2 e^{\beta \psi}|\triangle u+\bar
b\cdot \nabla u+\bar q u\| \geq C_9 \beta^{\frac{3}{2}} \|
r^{\frac{\epsilon}{2}}e^{\beta\psi}u\|. \label{how2}
\end{equation}
 Combining those two Carleman
inequalities (\ref{how1}) and (\ref{how2}) yields that
\begin{eqnarray}
\int_{\mathbb \mathbb B(p, h)} r^4 e^{2\beta \psi}|\triangle u+\bar
b\cdot \nabla u+\bar q u|^2 \, dvol &\geq& C \beta^3 \int_{\mathbb
B(p, h)} r^\epsilon e^{2\beta\psi} u^2 d vol \nonumber \\&+&C\beta^4
\int_{\mathbb B(p, \delta(1+\frac{C_8}{\beta}))} e^{2\beta \psi} u^2
dvol \label{comb}
\end{eqnarray}
for $u\in C^\infty_0\big(\mathbb B(p, h) \backslash \mathbb B(p,
\delta)\big)$ and $\beta>C(1+\|\bar b\|_{W^{1, \infty}}+\|\bar
q\|_{W^{1, \infty}})$.
\end{proof}

With aid of the Carleman estimates (\ref{comb}), we are in the
position to give the proof of Theorem \ref{th1}. The refined
doubling inequality and Bernstein's inequalities have been obtained
for classical eigenfunctions in \cite{DF3}.
\begin{proof}[Proof of Theorem \ref{th1}]

 We introduce a cut-off
function $\theta(x)\in C^\infty_0\big(\mathbb B(p, h) \backslash
\mathbb \mathbb B(p, \delta)\big)$ satisfying the
following properties: \\
(i): $\theta=1$ in $\mathbb B(p, \frac{h}{2})\backslash \mathbb B(p,
\delta+\frac{C\delta}{10\beta})$, \medskip \\
(ii): $|\nabla \theta|\leq \frac{C\beta}{\delta}$, $|\triangle
\theta|\leq \frac{C\beta^2}{\delta^2}$ in $\mathbb B(p,
\delta+\frac{C\delta}{10\beta}),$\medskip \\
(iii): $|\nabla \theta|\leq C$ in $\mathbb B(p, h)\backslash \mathbb
B(p,
\frac{h}{2})$. \\
Let $w(x)=\theta(x) u(x)$.  Since $u$ satisfies
$$\triangle u+\bar b\cdot \nabla u+\bar q u=0,$$ then $w$ satisfies
$$\triangle w+\bar b\cdot \nabla w+\bar q w=\triangle \theta u+2 \nabla \theta \cdot \nabla
u+ \bar b\cdot\nabla\theta u.           $$ Substituting $w$ into the
left hand side of the stronger inequality (\ref{comb})
 and calculating its integrals gives that
\begin{eqnarray}
&&\int_{\big(\mathbb B(p, h)\backslash \mathbb B(p,
\frac{h}{2})\big)\bigcup \big(\mathbb B(p,
\delta+\frac{C\delta}{10\beta})\backslash \mathbb B(p, \delta)\big)}
r^4 e^{2\beta\psi}|\triangle \theta u+2 \nabla \theta \cdot \nabla
u+\bar b  \cdot \nabla\theta u|^2 \nonumber\\
&&\leq  C\beta^2\int_{\mathbb B(p, h)\backslash \mathbb B(p,
\frac{h}{2})} r^4 e^{2\beta\psi}(u^2+|\nabla u|^2) \nonumber\\ &&+C\int_{\mathbb
B(p, \delta+\frac{C\delta}{10\beta})\backslash \mathbb B(p, \delta)}
r^4
e^{2\beta\psi}(\frac{\beta^4}{\delta^4}u^2+\frac{\beta^2}{\delta^2}
|\nabla u|^2+\frac{\beta^4}{\delta^2}u^2), \nonumber
\end{eqnarray}
where we have used the assumption for $\bar b$ and $\bar q$ in
(\ref{core}) and the assumption $\beta>C(1+\|\bar b\|_{W^{1,
\infty}}+\|\bar q\|^{1/2}_{W^{1, \infty}})$.

Substituting $w$ into the right hand side of (\ref{comb}) and taking the later inequality
into consideration yields that
\begin{eqnarray}
C\int_{\mathbb B(p, h)\backslash \mathbb B(p, \frac{h}{2})} r^4
e^{2\beta\psi}(u^2+|\nabla u|^2)+C\int_{\mathbb B(p,
\delta+\frac{C\delta}{10\beta})\backslash \mathbb B(p, \delta)} r^4
e^{2\beta\psi}(\frac{\beta^2}{\delta^4}u^2+\frac{1}{\delta^2}
|\nabla u|^2)\nonumber \\
\geq \beta \int_{\mathbb B(p, \frac{h}{2})\backslash \mathbb B(p,
\delta+\frac{C\delta}{10\beta})} r^\epsilon
e^{2\beta\psi}u^2+\beta^2 \int_{\mathbb B(p,
\delta+\frac{C\delta}{\beta})\backslash \mathbb B(p,
\delta+\frac{C\delta}{10\beta})}  e^{2\beta\psi}u^2. \label{late}
\end{eqnarray}
Using the fact that $\psi$ is a decreasing function and the standard elliptic
estimates, we have
\begin{eqnarray}
\int_{\mathbb B(p, h)\backslash \mathbb B(p, \frac{h}{2})} r^4
e^{2\beta\psi}|\nabla u|^2 &\leq& Ch^4 e^{2\beta\psi(\frac{h}{2})}
\int_{\mathbb B(p, h)\backslash \mathbb B(p, \frac{h}{2})}|\nabla
u|^2 \nonumber \\
&\leq & C\lambda^2 h^2
e^{2\beta\psi(\frac{h}{2})}\int_{\mathbb B(p, \frac{5h}{4})\backslash \mathbb B(p, \frac{h}{4})}u^2.
\label{mor1}
\end{eqnarray}
Thus, 
\begin{eqnarray}
C\int_{\mathbb B(p, h)\backslash \mathbb B(p, \frac{h}{2})} r^4
e^{2\beta\psi}(u^2+|\nabla u|^2)\leq C\lambda^2 h^2
e^{2\beta\psi(\frac{h}{2})}\int_{\mathbb B(p, \frac{5h}{4})\backslash \mathbb B(p, \frac{h}{4})}u^2.
\label{mor1}
\end{eqnarray}
By the decreasing property of $\psi$, we have 
\begin{eqnarray}
\beta \int_{\mathbb B(p, \frac{h}{2})\backslash \mathbb B(p,
\delta+\frac{C\delta}{10\beta})} r^\epsilon e^{2\beta\psi}u^2 &\geq
& \beta \int_{\mathbb B(p, \frac{h}{10})\backslash \mathbb B(p, \frac{h}{20})} r^\epsilon e^{2\beta\psi} u^2 \nonumber \\
 &\geq
& \beta h^\epsilon e^{2\beta\psi(\frac{h}{10})} \int_{\mathbb B(p, \frac{h}{10})\backslash \mathbb B(p, \frac{h}{20})} u^2.
 \label{mor2}
\end{eqnarray}
From the doubling inequality in \cite{Zh1}, we learn that
\begin{align}
e^{C{\lambda}} \int_{\mathbb B(p, \frac{h}{10})\backslash \mathbb B(p, \frac{h}{20})} u^2 \geq \int_{\mathbb B(p, \frac{5h}{4})\backslash \mathbb B(p, \frac{h}{4})}u^2
\end{align}
for some $C$ depending on $\mathcal{M}$. If we choose $\beta>{C}_0{\lambda}$ for some large constant ${C}_0$, from (\ref{mor1}) and (\ref{mor2}), we arrive at
\begin{equation}
\beta \int_{\mathbb B(p, \frac{h}{2})\backslash \mathbb B(p,
\delta+\frac{C\delta}{10\beta})} r^\epsilon e^{2\beta\psi}u^2 \geq
 C \int_{\mathbb
B(p, h)\backslash \mathbb B(p, \frac{h}{2})} r^4
e^{2\beta\psi}(|\nabla u|^2+u^2). \label{fur}
\end{equation}
The combination of (\ref{late}) and (\ref{fur}) yields that
\begin{equation}
\int_{\mathbb B(p, \delta+\frac{C\delta}{10\beta})\backslash \mathbb
B(p, \delta)} r^4
e^{2\beta\psi}(\frac{\beta^2}{\delta^4}u^2+\frac{1}{\delta^2}
|\nabla u|^2)\geq \beta^2 \int_{\mathbb B(p,
\delta+\frac{C\delta}{\beta})\backslash \mathbb B(p,
\delta+\frac{C\delta}{10\beta})}  e^{2\beta\psi}u^2.
\end{equation}
We continue to simplify the last inequality,
\begin{eqnarray}
(\delta+\frac{C\delta}{10\beta})^4
e^{2\beta\psi(\delta)}\frac{\beta^2}{\delta^4}\int_{\mathbb B(p,
\delta+\frac{C\delta}{10\beta})\backslash \mathbb B(p, \delta)} u^2
&+&(\delta+\frac{C\delta}{10\beta})^4
e^{2\beta\psi(\delta)}\frac{1}{\delta^2} \int_{\mathbb B(p,
\delta+\frac{C\delta}{10\beta})\backslash \mathbb B(p,
\delta)}|\nabla u|^2 \nonumber \\
&\geq & \beta^2
e^{2\beta\psi(\delta+\frac{C\delta}{\beta})}\int_{\mathbb B(p,
\delta+\frac{C\delta}{\beta})\backslash \mathbb B(p,
\delta+\frac{C\delta}{10\beta})} u^2.
\end{eqnarray}
From the explicit form of $\psi(x)$, there exists some small
positive constant $c$ such that
$$ \exp\{ 2\beta\psi(\delta+\frac{C\delta}{\beta})- 2\beta\psi(\delta)\}>
c$$  for $\beta$ large enough. Thus,
\begin{equation}
\frac{\beta^2}{\delta^2}\int_{\mathbb B(p,
\delta+\frac{C\delta}{10\beta})\backslash \mathbb B(p, \delta)} u^2
+\int_{\mathbb B(p, \delta+\frac{C\delta}{10\beta})\backslash
\mathbb B(p, \delta)}|\nabla u|^2\geq c
\frac{\beta^2}{\delta^2}\int_{\mathbb B(p,
\delta+\frac{C\delta}{\beta})\backslash \mathbb B(p,
\delta+\frac{C\delta}{10\beta})} u^2.\label{imp}
\end{equation}
Let
$$ \frac{C\delta}{10\beta}\leq \lambda^{-1}.          $$
Since $u$ satisfies (\ref{star}), standard elliptic theory yields
that
\begin{equation}
|\nabla u(x)|^2\leq C(\frac{\beta}{\delta})^{n+2} \int_{y\in \mathbb
B(x, \frac{C\delta}{10\beta})} u^2(y)\,dy.
\end{equation}
We integrate last inequality for $x\in \mathbb B(p,
\delta+\frac{C\delta}{10\beta})\backslash \mathbb B(p, \delta)$. It
follows that
\begin{eqnarray}
\int_{\mathbb B(p, \delta+\frac{C\delta}{10\beta})\backslash \mathbb
B(p, \delta)} |\nabla u|^2  &\leq& C(\frac{\beta}{\delta})^{n+2}
\int_{\{x\in \mathbb B(p, \delta+\frac{C\delta}{10\beta})\backslash
\mathbb B(p, \delta), \ y\in \mathbb B(x, \frac{C\delta}{10\beta})\}
}
u^2(y)\,dy dx \nonumber \\
&\leq &C \frac{\beta^2}{\delta^2} \int_{y\in \mathbb B(p,
\delta+\frac{C\delta}{5\beta})\backslash \mathbb B(p,
\delta-\frac{C\delta}{10\beta})}u^2(y)\,dy,
\end{eqnarray}
where we have changed the order of integration in the last
inequlity. Substituting last inequality into (\ref{imp}) gives that
\begin{equation}
\int_{\mathbb B(p, \delta+\frac{C\delta}{5\beta})\backslash \mathbb
B(p, \delta-\frac{C\delta}{10\beta})} u^2\geq \int_{\mathbb B(p,
\delta+\frac{C\delta}{\beta})\backslash \mathbb B(p,
\delta+\frac{C\delta}{10\beta})} u^2.
\end{equation}
Recall that $ u(x)=e_\lambda(x) \exp\{\lambda \varrho(x)\}$. Let
$$  \varrho(x_0)=\max_{\mathbb B(p, \delta+\frac{C\delta}{5\beta})\backslash \mathbb
B(p, \delta-\frac{C\delta}{10\beta})}\varrho(x), \quad
\varrho(x_1)=\min_{\mathbb B(p,
\delta+\frac{C\delta}{\beta})\backslash \mathbb B(p,
\delta+\frac{C\delta}{10\beta})} \varrho(x).         $$ Then
$$ \lambda | \varrho(x_0)-\varrho(x_1)|\leq C\max_{\overline{\mathcal{M}}}|\nabla \varrho(x)| \delta,    $$
since $\beta\geq C_0\lambda$.
Furthermore,  thanks to the fact that $\nabla \varrho(x)$ is a bounded function in
$\overline{\mathcal{M}}$, we have
\begin{equation}
C\int_{\mathbb B(p, \delta+\frac{C\delta}{5\beta})\backslash \mathbb
B(p, \delta-\frac{C\delta}{10\beta})}e^2_\lambda\geq \int_{\mathbb
B(p, \delta+\frac{C\delta}{\beta})\backslash \mathbb B(p,
\delta+\frac{C\delta}{10\beta})}e^2_\lambda. \label{aim}
\end{equation}
Adding $\int_{\mathbb B(p,
\delta+\frac{C\delta}{10\beta})}e^2_\lambda$ to both sides of
(\ref{aim}) yields that
\begin{equation}
C\int_{\mathbb B(p, \delta+\frac{C\delta}{5\beta})}e^2_\lambda\geq
\int_{\mathbb B(p, \delta+\frac{C\delta}{\beta})}e^2_\lambda.
\end{equation}
If we replace  $\delta=\frac{\delta'}{1+\frac{C}{5\beta}}$, we get
\begin{equation}
C\int_{\mathbb B(p, \delta')}e^2_\lambda\geq \int_{\mathbb B(p,
\delta'+\frac{C\delta'}{\beta})}e^2_\lambda.
\end{equation}
Since we can choose $\beta=C_0\lambda$, by finite number of iteration, we arrive
at
\begin{equation}
\int_{\mathbb B(p, \delta)}e^2_\lambda\geq C\int_{\mathbb B(p,
\delta(1+\frac{1}{\lambda}))}e^2_\lambda. \label{sar}
\end{equation}
This completes conclusion (A) in Theorem \ref{th1}. Next we show the
$L^2$-Bernstein's inequality. By the standard elliptic estimates,
\begin{equation}
|\nabla e_\lambda(x)|^2\leq \frac{C}{r^{2+n}} \int_{\mathbb B(x,
r)}e^2_\lambda(y)\,dy \label{elli}
\end{equation}
if $\lambda r\leq 1$ and $\mathbb B(x,
r)\subset \mathcal{M}$ . Choosing $r=\frac{\delta}{\lambda}$ and
integrating over $x\in \mathbb B(p, \delta)$,
\begin{eqnarray}
\int_{\mathbb B(p, \delta)}|\nabla e_\lambda(x)|^2\,dx &\leq&
\frac{C}{r^{2+n}} \int_{\{y\in\mathbb B(x, r), \ x\in\mathbb B(p,
\delta)\}}e^2_\lambda(y) dy dx
\nonumber\\
&\leq &  \frac{C}{r^{2}}\int_{\mathbb B(p, \delta+r)}
e^2_\lambda(x)\, dx,
\end{eqnarray}
where we have changed the order of integration in last inequality.
Application of (\ref{sar}) yields that
\begin{equation}
\int_{\mathbb B(p, \delta)}|\nabla e_\lambda(x)|^2 \,dx\leq
\frac{C\lambda^2}{\delta^{2}} \int_{\mathbb B(p, \delta)}
e^2_\lambda(x)\,dx.
\end{equation}
Thus, we arrive at the conclusion (B).

We continue to obtain $L^\infty$ version of Bernstein's inequality.
For $x\in \mathbb B(p, \delta)$, choosing
$r=\frac{\delta}{\lambda}$, the refined doubling inequality
(\ref{sar}) and (\ref{elli}) yield that
\begin{eqnarray}
|\nabla e_\lambda(x)|^2 \leq \frac{C}{r^{2+n}} \int_{\mathbb B(x,
r)}e^2_\lambda &\leq &\frac{C}{r^{2+n}}\int_{\mathbb B(p,
\delta+r)}e^2_\lambda \nonumber \\
&\leq & \frac{C}{r^{2+n}}\int_{\mathbb B(p,
\delta)}e^2_\lambda\nonumber \\ &\leq&
\frac{C}{r^{2+n}}\delta^n\max_{\mathbb B(p, \delta) }e^2_\lambda.
\end{eqnarray}
Therefore,
\begin{equation}
|\nabla e_\lambda(x)|\leq
\frac{C\lambda^{\frac{n+2}{2}}}{\delta}\max_{\mathbb B(p, \delta)
}|e_\lambda|
\end{equation}
for any $x\in \mathbb B(p, \delta)$. The conclusion (C) in Theorem
\ref{th1} is arrived.

\end{proof}

\section{Upper bound of Steklov eigenfunctions}

In this section, we will prove the optimal upper bound for interior
Steklov eigenfunctions. Assume that $\mathcal{M}$ is a real analytic
Riemannian manifold with boundary. We first show the measure of
nodal sets in the neighborhood close to boundary, then show the upper
bound of nodal sets away from the boundary $\partial\mathcal{M}$.
Since $\mathcal{M}$ is a real analytic Riemannian manifold with
boundary, we may embed $\mathcal{M}\subset \mathcal{M}_1$ as a
relatively compact subset, where $\mathcal{M}_1$ is an open real
analytic Riemannian manifold. The real analytic Riemannian manifold
$\mathcal{M}$ and $\mathcal{M}_1$ are of the same dimension.  We analytically extend the eigenfunction $e_\lambda$ in $\mathcal{M}_1$. Denote the neighborhood of the boundary $\partial\mathcal{M}$ as $\mathcal{M}_r=\{ x\in \mathcal{M}_1| dist\{ x, \mathcal{M}\}\leq r\}$.
To do the analytic continuation across the boundary, we want to get rid of $\lambda$ on the boundary. We introduce the following lifting argument.
Let $$\hat{v}(x, t)=e^{\lambda t} e_\lambda(x).$$
Then $\hat{v}(x, t)$ satisfies the equation
\begin{equation}
\left \{ \begin{array}{rll}
\triangle_g \hat{v} +\partial_t^2 \hat{v}-\lambda^2 \hat{v}=0 \quad &\mbox{in} \ \mathcal{M}\times (-\infty, \ -\infty),  \medskip\\
\frac{\partial \hat{v}}{\partial \nu}-\frac{\partial \hat{v}}{\partial t}=0 \quad &\mbox{on} \ {\partial\mathcal{M}}\times (-\infty, \ -\infty).
\end{array}
\right.
\end{equation}
We also want to get rid of $\lambda$ in the equation.
Choose $${v}(x, t, s)=e^{i\lambda s} \hat{v}(x, t).$$
Then we get that
\begin{equation}
\left \{ \begin{array}{rll}
\triangle_g {v} +\partial_t^2 {v}+ \partial_s^2 {v}=0 \quad &\mbox{in} \ {\mathcal{M}}\times (-\infty, \ -\infty)\times (-\infty, \ -\infty),  \medskip\\
\frac{\partial {v}}{\partial \nu}-\frac{\partial {v}}{\partial t}=0 \quad &\mbox{on} \ {\partial\mathcal{M}}\times (-\infty, \ -\infty)\times (-\infty, \ -\infty).
\end{array}
\right.
\label{killz}
\end{equation}
We can see that (\ref{killz}) is as  a uniform elliptic equation with oblique boundary conditions. We
introduce the cubes with unequal radius as
\begin{align*}
\Omega_{R, \rho}=\{ (x, t, s)\in \mathbb R^{n+2}| |x_i|<R \ \mbox{when} \ i<n, |x_n|<\rho R, \ |t|<R, \ |s|<R\}
\end{align*}
and half-cube
\begin{align*}
\Omega^+_{R, \rho}=\{ (x, t, s)\in \mathbb R^{n+2} | |x_i|<R \ \mbox{when} \ i<n, 0\leq x_n<\rho R, \ |t|<R, \ |s|<R\}.
\end{align*}
Choose any point $p\in \partial\mathcal{M}$, using Fermi coordinates and rescaling arguments, we may consider the function $v(x, t, s )$  locally in the cube centered at origin with the flatten boundary. Hence, $v(x, t, s )$ satisfies the following equation locally
\begin{equation}
\left \{ \begin{array}{lll}
\triangle_g {v} +\partial_t^2 {v}+ \partial_s^2 {v}=0 \quad &\mbox{in} \ \Omega^+_{2, 1},  \medskip\\
\frac{\partial {v}}{\partial x_n}-\frac{\partial {v}}{\partial t}=0 \quad &\mbox{on} \  \Omega^+_{2, 1}\cap\{x_n=0\}.
\end{array}
\right.
\label{backgo}
\end{equation}

Thanks to the Cauchy-Kovalevsky theorem (Theorem 9.4.5)  in \cite{H}, we can extend $v(x, t, s)$ to the region $\Omega_{1, \rho}$, where $0<\rho$ depends only on $\mathcal{M}$. Furthermore, the growth of the extended $v$ is controlled as
\begin{align}
\|v\|_{L^\infty(\Omega_{{1}, \rho})}
\leq C\|v\|_{L^\infty (\Omega^+_{2, 1})},
\label{agree}
\end{align}
where $C$ depends only on $\mathcal{M}$.
By compactness of the manifold and  the uniqueness of the analytic continuation, it follows that
\begin{align}
-\triangle e_\lambda(x)= 0 \quad &\mbox{in} \ \widehat{\mathcal{M}}_1,
\label{fuckzel}
\end{align}
where $\widehat{\mathcal{M}}_1= \{ x\in \mathcal{M}_1| dist\{x, \ \mathcal{M}\}\leq \rho\}$.
Recall the definition of ${v}(x, t, s)=e^{\lambda t} e^{\lambda i s}  e_\lambda(x)$, it follows from (\ref{agree}) that
\begin{align}
\|e_\lambda\|_{L^\infty(\mathbb B_{\rho})}\leq e^{C\lambda} \|e_\lambda\|_{L^\infty(\mathbb B^+_2)}.
\end{align}

From the Proposition \ref{proo} and the even extension of $u$, the following doubling inequality holds in half balls as
$$
\|u\|_{L^\infty(\mathbb B^+_{2r})}\leq
e^{C\lambda}\|u\|_{L^\infty(\mathbb B^+_{r})}
$$
for $0<r<r_0$, where $r_0$ depends only $\mathcal{M}$.
From the relations of $u$ and $e_\lambda$, we obtain the following doubling inequality
\begin{equation}
\|e_\lambda \|_{L^\infty(\mathbb B^+_{2r})}\leq e^{C {\lambda}}\| e_\lambda \|_{L^\infty(\mathbb B^+_{r})}.
\label{halfdouble2}
\end{equation}

Iterating the doubling inequality (\ref{halfdouble2}) in the half balls by finite number of steps, we can show that
\begin{align}
\|e_\lambda\|_{L^\infty(\mathbb B_{\rho})}&\leq e^{C\lambda} \|e_\lambda\|_{L^\infty(\mathbb B^+_\frac{\rho}{2})} \nonumber \\
&\leq e^{C\lambda} \|e_\lambda\|_{L^\infty(\mathbb B_\frac{\rho}{2})}.
\end{align}
By rescaling arguments, it also implies that
\begin{align}
\|e_\lambda\|_{L^\infty(\mathbb B_{2r})}\leq e^{C\lambda} \|e_\lambda\|_{L^\infty(\mathbb B_r)}
\label{correct}
\end{align}
for any $r\leq \frac{\rho}{2}$ with $\mathbb B_{2r}\subset \widehat{\mathcal{M}}_1$ and $C$ depending only on $\mathcal{M}$.

To get the upper bounds of nodal sets for Steklov eigenfunctions,
we need to extend $e_\lambda(x)$ locally as a holomorphic function  in
$\mathbb{C}^n$. Applying elliptic estimates for $e_\lambda$ in (\ref{fuckzel}) in a ball $\mathbb B(p, r)
\subset \widehat{\mathcal{M}}_1$, we have
\begin{equation}
|\frac{ D^{{\alpha}} {e_\lambda}(p)}{{\alpha} !}|\leq
C^{|\alpha|}_1 r^{-|{\alpha}| }\|{e_\lambda}\|_{L^\infty},
\end{equation}
where ${\alpha}$ is a multi-index and $C_1>1$ depends on $\mathcal{M}$.
By translation, we still consider the point $p$ as the origin.
Summing a geometric series, we can extend $e_\lambda(x)$ to be a
holomorphic function $e_\lambda(z)$ with $z\in\mathbb{C}^n$ to have
\begin{equation}
\sup_{|z|\leq \frac{r}{2C_1}}|e_\lambda(z)|\leq C_2 \sup_{|x|\leq
r}|e_\lambda(x)|
\end{equation}
with $C_2>1$.
  Iterating the doubling inequality (\ref{correct}) finitely many times, by the rescaling arguments, we obtain that
\begin{equation}
\sup_{|z|\leq 2r}|e_\lambda(z)|\leq e^{C_3\lambda} \sup_{|x|\leq
r}|e_\lambda(x)| \label{dara}
\end{equation}
for $0<r<\rho_0$ with $\rho_0$ depending on $\mathcal{M}$ and $C_3$ depends on $\mathcal{M}$.

We need a lemma concerning the growth of a complex analytic function
with the number of zeros. See e.g. \cite{DF} and \cite{HL}.
\begin{lemma}
Suppose $f: \textbf{ B}(0, 1)\subset \mathbb{C}\to \mathbb{C}$ is an
analytic function satisfying
$$ f(0)=1\quad \mbox{and} \quad \sup_{ \textbf{B}(0, 1)}|f|\leq 2^N$$
for some positive constant $N$. Then for any $r\in (0, 1)$, there
holds
$$\sharp\{z\in\textbf{B}(0, r): f(z)=0\}\leq cN        $$
where $c$ depends on $r$. Especially, for $r=\frac{1}{2}$, there
holds
$$\sharp\{z\in \textbf{B}(0, 1/2): f(z)=0\}\leq N.        $$
\label{wwhy}
\end{lemma}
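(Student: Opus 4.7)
The plan is to apply Jensen's formula, which is the standard tool relating the number of zeros of an analytic function inside a disk to the growth of the function on the boundary.

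First I would fix $\rho \in (r, 1)$ and let $a_1, \dots, a_{k_\rho}$ denote the zeros of $f$ inside $\mathbf{B}(0, \rho)$, counted with multiplicity. Since $f(0) = 1 \neq 0$, Jensen's formula gives
\begin{equation}
\sum_{j=1}^{k_\rho} \log \frac{\rho}{|a_j|} = \frac{1}{2\pi} \int_0^{2\pi} \log |f(\rho e^{i\theta})| \, d\theta - \log |f(0)| = \frac{1}{2\pi} \int_0^{2\pi} \log |f(\rho e^{i\theta})| \, d\theta. \nonumber
\end{equation}
Letting $\rho \nearrow 1$ (and using that each term on the left is nonnegative, so the monotone convergence argument is clean), the hypothesis $\sup_{\mathbf{B}(0,1)}|f| \leq 2^N$ yields
\begin{equation}
\sum_{j} \log \frac{1}{|a_j|} \leq \log \sup_{\mathbf{B}(0,1)} |f| \leq N \log 2, \nonumber
\end{equation}
where the sum runs over all zeros of $f$ in $\mathbf{B}(0,1)$.

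Next, to count the zeros inside the smaller disk $\mathbf{B}(0,r)$, I would note that each such zero satisfies $|a_j| \leq r$, hence $\log(1/|a_j|) \geq \log(1/r) > 0$. Restricting the sum to these zeros and denoting their number by $k_r$, I obtain
\begin{equation}
k_r \log(1/r) \leq \sum_{|a_j| \leq r} \log \frac{1}{|a_j|} \leq N \log 2, \nonumber
\end{equation}
so $k_r \leq c(r) N$ with $c(r) = \log 2 / \log(1/r)$. Taking $r = 1/2$ gives $c(1/2) = 1$ and therefore $k_{1/2} \leq N$, as claimed.

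The argument is short and essentially amounts to invoking Jensen's formula correctly; the only minor technical point is justifying the passage $\rho \to 1$ in Jensen's identity, which is handled by the monotonicity of $\log(\rho/|a_j|)$ in $\rho$ combined with the uniform bound on $|f|$ in $\mathbf{B}(0,1)$. No obstacle of substance arises.
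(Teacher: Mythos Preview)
Your proof is correct and is exactly the standard Jensen's formula argument that the paper has in mind; the paper does not give its own proof of this lemma but simply cites \cite{BL} and \cite{HL}, where the same approach appears. The computation $c(1/2)=\log 2/\log 2=1$ recovers the sharp constant stated in the second assertion, so nothing further is needed.
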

We are ready to show the upper bound of interior nodal sets of Steklov eigenfunctions based on doubling inequality and the growth control lemma, see the pioneering work by \cite{DF} and \cite{Lin}.
\begin{proof}[Proof of Theorem 2] We first prove the nodal sets in a
neighborhood $\mathcal{M}_{\frac{\rho}{4}}$.  By rescaling and
translation, we can argue on scales of order one. Let $p\in \mathbb
B_{1/4}$ be the point where the maximum of $|e_\lambda|$ in $\mathbb
B_{1/4}$ is attained. For each direction $\omega \in S^{n-1}$, set
$\hat{e}_{\omega}(z)=e_\lambda(p+z\omega)$ in $z\in \textbf{B}(0,
1)\subset\mathbb{C}$. Denoted by $N(\omega)=\sharp\{z\in\textbf{B}(0, 1/2)\subset
\mathcal{C}| \hat{e}_\omega(z)=0\}$. By the doubling property (\ref{dara}) and the
Lemma \ref{wwhy}, we have
\begin{eqnarray}
\sharp\{ x \in \mathbb B(p, 1/2) &|& x-p \ \mbox{is parallel to} \
\omega \ \mbox{and} \ e_\lambda(x)=0\} \nonumber\\&\leq&
\sharp\{z\in\textbf{B}(0, 1/2)\subset
\mathcal{C}| \hat{e}_\omega(z)=0\} \nonumber\\
&=&N(\omega)\leq C\lambda.
\end{eqnarray}
With aid of integral geometry estimates, it implies that
\begin{eqnarray}
H^{n-1}\{ x \in \mathbb B(p, 1/2)| e_\lambda(x)=0\} &\leq&
c(n)\int_{S^{n-1}} N(\omega)\,d \omega \nonumber\\
&\leq &\int_{S^{n-1}}C\lambda\, d\omega=C\lambda.
\end{eqnarray}
Therefore, we have 
\begin{eqnarray}
H^{n-1}\{ x \in \mathbb B(0, 1/4)| e_\lambda(x)=0\} \leq C\lambda.
\end{eqnarray}

By covering the compact manifold
$\mathcal{M}_{\frac{\rho}{4}}\subset \widehat{\mathcal{M}}_1$ by finite number
of coordinate charts, we arrive at
\begin{equation}H^{n-1}\{x\in
\mathcal{M}_{\frac{\rho}{4}}|e_\lambda(x)=0\}\leq C\lambda.
\label{last12}
\end{equation}

Next we deal with the measure of nodal sets in
$\mathcal{M}\backslash \mathcal{M}_{\frac{\rho}{4}}$. We have
obtained the doubling inequality (\ref{kaoni}) in the interior of the manifold.
Since $u(x)= e_\lambda(x) \exp\{\lambda \varrho(x)\}$ and $-\hat{C}
<\varrho(x)\leq \hat{C}$ for some constant $\hat{C}$ depending on
$\mathcal{M}$, it is true that
\begin{equation}
\|e_\lambda \|_{L^\infty(\mathbb B(p, \,{2r}))}\leq
e^{C\lambda}\|e_\lambda\|_{L^\infty(\mathbb B(p,\, {r}))}
\label{corre}
\end{equation}
holds for $p\in \mathcal{M}\backslash \mathcal{M}_{\frac{\rho}{4}}$
and $0<r\leq \rho_0\leq \frac{\rho}{4}$. We similarly extend
$e_\lambda(x)$ locally as a holomorphic function in $\mathbb {C}^n$.
Since $e_\lambda(x)$ is harmonic in $\mathcal{M}\backslash
\mathcal{M}_{\frac{\rho}{4}}$,  applying elliptic estimates in a
small ball $\mathbb B(p, \ r)$, we have
\begin{equation}
|\frac{ D^\alpha e_\lambda(p)}{\alpha !}|\leq
C^{|\alpha|}_4 r^{-|\alpha|}\|e_\lambda\|_{L^\infty},
\end{equation}
where $C_4>1$ depends only on $\mathcal{M}$.
We consider the point $p$ as the origin as well. Summing a geometric
series, we can extend $e_{\lambda}(x)$ to be a holomorphic function
$e_{\lambda}(z)$ with $z\in\mathbb{C}^n$. Moreover, we have
\begin{equation}
\sup_{|z|\leq \frac{r}{2C_4}}|e_{\lambda}(z)|\leq C_5 \sup_{|x|\leq
r}|e_{\lambda}(x)|
\end{equation}
with $C_5>1$.

Thanks to the doubling inequality (\ref{corre}), by finite steps of iterations, we obtain that
\begin{equation}
\sup_{|z|\leq \frac{r}{2C_4}} |e_\lambda(z)|\leq e^{C_{6}\lambda}
\sup_{|x|\leq \frac{r}{4C_4}}|e_\lambda(x)|
\end{equation}
with $C_6$ depends on $\mathcal{M}$. In
particular, by rescaling arguments,
\begin{equation}
\sup_{|z|\leq 2r}|e_\lambda(z)|\leq e^{C\lambda} \sup_{|x|\leq
r}|e_\lambda(x)| \label{dara1}
\end{equation}
holds for $0<r<\frac{\rho_0}{2}$ with ${\rho_0}$ depending on $\mathcal{M}$. Using the
same arguments as obtaining the nodal sets in the neighborhood of the
boundary, we take advantage of lemma \ref{wwhy} and the inequality
(\ref{dara1}).  By rescaling and translation, we can argue on scales
of order one. Let $p\in \mathbb B_{1/4}$ be the point where the
maximum of $|e_\lambda|$ in $\mathbb B_{1/4}$ is achieved. For each
direction $\omega \in S^{n-1}$, set $e^{\omega}_\lambda(z)=
e_\lambda(p+z\omega)$ in $z\in \textbf{B}(0, 1)\subset\mathbb{C}$.
From the doubling property (\ref{dara1}) and the lemma \ref{wwhy}
above, we have
\begin{eqnarray}
\sharp\{ x \in \mathbb B(p, 1/2) &|& x-p \ \mbox{is parallel to} \
\omega \ \mbox{and} \ e_\lambda(x)=0\} \nonumber\\&\leq&
\sharp\{z\in\textbf{B}(0, 1/2)\subset
\mathcal{C}| e_\lambda^\omega(z)=0\} \nonumber\\
&=&N(\omega)\leq C\lambda.
\end{eqnarray}
Thanks to the integral geometry estimates, we get
\begin{eqnarray}
H^{n-1}\{ x \in \mathbb B(p, 1/2)| e_\lambda(x)=0\} &\leq&
c(n)\int_{S^{n-1}} N(\omega)\,d \omega \nonumber\\
&\leq &\int_{S^{n-1}}C\lambda\, d\omega=C\lambda.
\end{eqnarray}
Thus, we obtain
\begin{eqnarray}
H^{n-1}\{ x \in \mathbb B(0, 1/4)| e_\lambda(x)=0\} \leq C\lambda.
\end{eqnarray}
Using the finite number of coordinate charts to cover the compact
manifold $\mathcal{M}\backslash\mathcal{M}_{\frac{\rho}{4}}$, we
obtain
\begin{equation} H^{n-1}\{x\in
\mathcal{M}\backslash\mathcal{M}_{\frac{\rho}{4}}|e_\lambda(x)=0\}\leq
C\lambda. \label{last2}
\end{equation}
Together with (\ref{last12}) and (\ref{last2}), we arrive at the
conclusion in Theorem \ref{th2}.

\end{proof}

\section{Appendix}
In this section, we provide the proof of Lemma \ref{carl} and  some arguments stated in the proof Proposition \ref{pro2}.
Recall that
$$ \|\mathcal{L}_\beta (v)\|^2_\phi\geq \frac{1}{2}\mathcal{A}-\mathcal{B},              $$
where
\begin{eqnarray}\mathcal
{L}_\beta (v)&=&\partial^2_t v+\big(2\beta \phi'+e^{2t} {\bar
b}_t+(n-2)+\partial_t \ln\sqrt{\gamma}\big)\partial_t v+ e^{2t} \bar
b_i\partial_i v \nonumber \\
&+&\big(\beta^2\phi'^2+\beta\phi'{\bar b}_t e^{2t}+\beta
\phi''+(n-2)\beta \phi'+\beta \partial_t \ln \sqrt{\gamma}\phi'\big)
v+\triangle_{\omega} v+ e^{2t} \bar q v
\end{eqnarray}
and
\begin{eqnarray} \mathcal{A}=\|
\partial^2_t v&+& \triangle_{\omega} v +\big(2\beta \phi'+e^{2t} {\bar
b}_t\big)\partial_t v+ e^{2t} \bar
b_i\partial_i v \nonumber \\
&+&\big(\beta^2\phi'^2+\beta\phi'{\bar b}_t e^{2t}+(n-2)\beta \phi'+
e^{2t} \bar q \big) v\|^2_{\phi} \end{eqnarray} and \begin{equation}
\mathcal{B}=\|\beta \phi'' v+\beta \partial_t \ln\sqrt{\gamma} \phi'
v+(n-2)\partial_t v+\partial_t \ln\sqrt{\gamma}\partial_t
v\|^2_{\phi}.
\end{equation}
Modifying the arguments in \cite{BC} and \cite{Zh}, we can obtain the following lemma, which verifies the proof of (\ref{four}) and (\ref{doit}) in Proposition \ref{pro2}.
\begin{lemma}
There holds that
\begin{align}
\|\mathcal{L}_\beta (v)\|^2_\phi& \geq
\frac{1}{4}\mathcal{A} \nonumber \\
& \geq C\beta^3\int |\phi''||v|^2
\phi'^{-3}\sqrt{\gamma}\,dt d \omega +C\beta \int |\phi''||D_\omega
v|^2
\phi'^{-3}\sqrt{\gamma}\,dt d \omega \nonumber \\
&+C\beta\int |\partial_t v|^2 \phi'^{-3}\sqrt{\gamma}\,dt d \omega.
\end{align}
\label{appen}
\end{lemma}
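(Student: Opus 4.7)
The plan has two halves corresponding to the two inequalities in the statement.

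\textbf{Step 1: Absorbing $\mathcal{B}$ into $\mathcal{A}$.} Starting from the already-noted bound $\|\mathcal{L}_\beta v\|^2_\phi \geq \tfrac12 \mathcal{A} - \mathcal{B}$, it suffices to show $\mathcal{B} \leq \tfrac14\mathcal{A}$. Each summand inside $\mathcal{B}$ is of lower order than a corresponding summand of $\mathcal{A}$: by (\ref{gamma1}) we have $|\partial_t\ln\sqrt{\gamma}| \leq Ce^t$, by (\ref{ass1}) $\phi'$ is close to $1$, and $\beta$ is large. For instance, the $(n-2)\partial_t v$ term is dominated by the $2\beta\phi'\partial_t v$ term appearing in $\mathcal{A}$, the $\beta^2\phi'' v$ term is dominated by $\beta^2\phi'^2 v$ via Young's inequality, and the $\partial_t\ln\sqrt{\gamma}\,\partial_t v$ term is absorbed using $e^t \ll \beta\phi'$ for $t \leq T_0$ and $\beta$ large. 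Applying Young's inequality term by term yields $\mathcal{B}\leq \tfrac14 \mathcal{A}$ and hence the first stated inequality.

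\textbf{Step 2: Lower bound on $\mathcal{A}$.} The standard symmetric/antisymmetric splitting with respect to $\langle\cdot,\cdot\rangle_\phi$ writes the integrand of $\mathcal{A}$ as $Pv + Qv$ with
\begin{equation*}
Pv = \partial_t^2 v + \triangle_\omega v + \bigl(\beta^2\phi'^2 + \beta\phi'\bar{b}_t e^{2t} + (n-2)\beta\phi' + e^{2t}\bar{q}\bigr)v,
\end{equation*}
\begin{equation*}
Qv = (2\beta\phi' + e^{2t}\bar{b}_t)\partial_t v + e^{2t}\bar{b}_i\partial_i v.
\end{equation*}
Expanding $\mathcal{A} = \|Pv\|_\phi^2 + \|Qv\|_\phi^2 + 2\langle Pv, Qv\rangle_\phi$ and discarding $\|Pv\|_\phi^2 \geq 0$, the remaining two terms must be shown to dominate the desired right-hand side. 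The three targeted positive contributions arise as follows: the commutator between $2\beta\phi'\partial_t$ and $\beta^2\phi'^2$ produces a factor $4\beta^3(\phi')^2\phi''$, which after multiplication by $v^2\phi'^{-3}\sqrt{\gamma}$ and using $\phi''<0$ yields $C\beta^3\int|\phi''|v^2\phi'^{-3}\sqrt{\gamma}\,dt\,d\omega$; integrating by parts in $t$ on $\langle\triangle_\omega v, 2\beta\phi'\partial_t v\rangle_\phi$ sends the $\partial_t$ onto $\phi'$ and the weight, producing $C\beta\int|\phi''||D_\omega v|^2\phi'^{-3}\sqrt{\gamma}\,dt\,d\omega$; and $\|Qv\|_\phi^2$ with $2\beta\phi'\asymp \beta$ directly controls $C\beta\int|\partial_t v|^2\phi'^{-3}\sqrt{\gamma}\,dt\,d\omega$. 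The combination of these three lower bounds is exactly the claim of Lemma \ref{appen}.

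\textbf{The main obstacle} is bookkeeping rather than new ideas: every integration by parts generates error terms from differentiating the weight $\phi'^{-3}\sqrt{\gamma}$, from the $t$-dependence of $\gamma^{ij}$ in $\triangle_\omega$, and from the perturbations $\bar{b},\bar{q}$. These errors are controlled by the standing assumption $\beta > C\bigl(1+\|\bar{b}\|_{W^{1,\infty}}+\|\bar{q}\|^{1/2}_{W^{1,\infty}}\bigr)$ together with $|\partial_t\gamma_{ij}|\leq Ce^t\gamma_{ij}$ from (\ref{gamma1}), plus the crucial convexity property (\ref{ass2}) which gives $e^t/|\phi''|\to 0$ as $t\to-\infty$. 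Carefully grouping these error terms and absorbing them into a small fraction of the three main positive terms (at the cost of enlarging $C$ once more and restricting $T_0$) completes the argument, following the commutator-method template of Bellova-Lin and Donnelly-Fefferman adapted to the Lipschitz double manifold $\overline{\mathcal{M}}$.
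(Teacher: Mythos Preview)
Your Step 2 contains a genuine gap. You discard $\|Pv\|^2_\phi$ and claim that the leading term $C\beta^3\int|\phi''|\,v^2\,\phi'^{-3}\sqrt{\gamma}$ arises from the pairing of $\beta^2\phi'^2 v$ with $2\beta\phi'\partial_t v$ inside $2\langle Pv,Qv\rangle_\phi$. But compute that pairing directly: the coefficient $\phi'^2\cdot\phi'$ multiplies against the weight $\phi'^{-3}$ to give exactly $1$, so
\[
2\langle \beta^2\phi'^2 v,\ 2\beta\phi'\partial_t v\rangle_\phi
= 4\beta^3\int v\,\partial_t v\,\sqrt{\gamma}\,dt\,d\omega
= -2\beta^3\int v^2\,\partial_t\sqrt{\gamma}\,dt\,d\omega,
\]
which by (\ref{gamma1}) is only $O\!\big(\beta^3\int e^{t}v^2\big)$ and has no definite sign. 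By (\ref{ass2}) this is strictly lower order than $\beta^3\int|\phi''|v^2$. No other pairing in $2\langle Pv,Qv\rangle_\phi$ produces a positive $\beta^3|\phi''|v^2$ contribution either; the paper's own bound (\ref{dan3}) for the cross term $\mathcal{A}'_3$ confirms that it yields at best $-C\beta^2\int|\phi''|v^2$ and $-C\beta^3\int e^{t}v^2$, never a positive $\beta^3|\phi''|$ piece. The formal commutator $[2\beta\phi'\partial_t,\beta^2\phi'^2]=4\beta^3\phi'^2\phi''$ you quote is the commutator in the \emph{unweighted} space; the weight $\phi'^{-3}$ was chosen precisely so that this cancellation occurs.

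The paper extracts the $\beta^3|\phi''|v^2$ term from the very piece you discarded. It uses $|\phi''|\le 1$ to write $\mathcal{A}'_1=\|Pv\|^2_\phi\geq\frac{\alpha}{\beta}\bigl\|\sqrt{|\phi''|}\,Pv\bigr\|^2_\phi$ for a small constant $\alpha$, expands, and isolates the zeroth-order part $\mathcal{K}_2\geq C\beta^4\int|\phi''|v^2\phi'^{-3}\sqrt{\gamma}$; after the $\alpha/\beta$ rescaling this becomes $C\alpha\beta^3\int|\phi''|v^2$, with $\alpha$ small enough that the accompanying negative $|D_\omega v|^2$ error is absorbed by the positive $|D_\omega v|^2$ term coming from $\mathcal{A}'_3$. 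Your Step 1 is also out of order: one cannot show $\mathcal{B}\le\tfrac14\mathcal{A}$ by comparing individual summands inside two different squared norms. The paper first establishes the lower bound (\ref{doit2}) for $\mathcal{A}$, then bounds $\mathcal{B}\le \beta^2\int|\phi''|v^2\phi'^{-3}\sqrt{\gamma}+C\int e^{2t}|\partial_t v|^2\phi'^{-3}\sqrt{\gamma}$, quantities now dominated by (\ref{doit2}) with a spare factor of $\beta$.
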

\begin{proof}
We decompose $\mathcal{A}$ as
$$ \mathcal{A}=\mathcal{A}'_1+\mathcal{A}'_2+\mathcal{A}'_3,      $$
where
$$\mathcal{A}'_1=\|\partial^2_t v
+\big(\beta^2\phi'^2+\beta\phi'{\bar b}_t e^{2t}+(n-2)\beta \phi'+
e^{2t} \bar q \big) v+ \triangle_{\omega} v\|^2_{\phi} $$
and
$$\mathcal{A}'_2=\|\big(2\beta \phi'+e^{2t} {\bar
b}_t\big)\partial_t v+e^{2t}\bar{b}_i\partial_i v\|^2_{\phi}
$$
and
\begin{eqnarray}\mathcal{A}'_3=2< \partial^2_t v+
\big(\beta^2\phi'^2+\beta\phi'{\bar b}_t e^{2t}+(n-2)\beta \phi'+
e^{2t} \bar q \big) v+\triangle_\omega v,\nonumber \\
\big(2\beta \phi'+e^{2t} {\bar b}_t\big)\partial_t
v+e^{2t}\bar{b}_i\partial_i v>_\phi. \nonumber
\end{eqnarray}
We first compute $\mathcal{A}'_1$. Let $\hat{\alpha}$ be some small positive constant. Recall that $\phi(t)=t-e^{\epsilon t}$. Since $|\phi^{''}|\leq 1$ and $\beta$ is large enough, it is true that
\begin{equation}
\mathcal{A}'_1\geq \frac{\hat{\alpha}}{\beta} \mathcal{A}^{''}_1,
\label{basic}
\end{equation}
where  $\mathcal{A}^{''}_1$ is given by
$$ \mathcal{A}^{''}_1=\left\Vert\sqrt{|\phi^{''}|}[\partial^2_t v
+\big(\beta^2\phi'^2+\beta\phi'{\bar b}_t e^{2t}+(n-2)\beta \phi'+
e^{2t} \bar q \big) v+ \triangle_{\omega} v ]\right\Vert_\phi^2. $$
We split $\mathcal{A}^{''}_1$ into three parts:
\begin{equation}
\mathcal{A}^{''}_1=\mathcal{K}_1+\mathcal{K}_2+\mathcal{K}_3,
\label{fuckgui}
\end{equation}
where
$$\mathcal{K}_1=\left\Vert \sqrt{|\phi^{''}|}\big(\partial^2_t v+\triangle_{\omega} v\big)\right\Vert_\phi^2 $$ and
$$\mathcal{K}_2=\left\Vert \sqrt{|\phi^{''}|}\big(\beta^2 \phi'^2+ \beta\phi'{\bar b}_t e^{2t}+(n-2)\beta \phi'+
e^{2t} \bar q \big) v \right\Vert_\phi^2 $$
and
$$\mathcal{K}_3=2\bigg< |\phi^{''}|(\partial^2_t v+\triangle_{\omega} v), \ \big(\beta^2 \phi'^2+ \beta\phi'{\bar b}_t e^{2t}+(n-2)\beta \phi'+
e^{2t} \bar q \big) v \bigg>_\phi.  $$
The expression $\mathcal{K}_1$ is considered to be a nonnegative term. We estimate $\mathcal{K}_2$. By the triangle inequality,
\begin{align}
\mathcal{K}_2\geq \beta^4 \left\Vert \sqrt{|\phi^{''}|} \phi' v\right\Vert_\phi^2- \left\Vert\sqrt{|\phi^{''}|}\big(\beta\phi'{\bar b}_t e^{2t}+(n-2)\beta \phi'+
e^{2t} \bar q \big) v   \right\Vert_\phi^2.
\label{faill}
\end{align}
Using the fact that $\beta>C(1+\|\bar b\|_{W^{1, \infty}}+\|\bar
q\|^{1/2}_{W^{1, \infty}})$, we have
\begin{align}
\left\Vert\sqrt{|\phi^{''}|}\big(\beta\phi'{\bar b}_t e^{2t}+(n-2)\beta \phi'+
e^{2t} \bar q \big) v   \right\Vert_\phi^2 &\leq C\beta^4  \left\Vert\sqrt{|\phi^{''}|} e^t v \right\Vert_\phi^2 \nonumber \\ &+
C\beta^2 \left\Vert \sqrt{|\phi^{''}|} v \right\Vert_\phi^2.
\label{fail}
\end{align}
Since $t$ is close to negative infinity and then $\phi'$ is close to $1$,  from (\ref{faill}) and (\ref{fail}), we obtain that
\begin{equation}
\mathcal{K}_2\geq C\beta^4 \left\Vert \sqrt{|\phi^{''}|} v \right\Vert_\phi^2,
\label{soso}
\end{equation}
where we also used the fact that $\phi'$ is close to $1$.
We derive a lower bound for $\mathcal{K}_3$. Integration by parts shows that
\begin{align}
\mathcal{K}_3&=-2\int |\phi^{''}||\partial_t v|^2 \big(\beta^2\phi'^2+ \beta\phi'{\bar b}_t e^{2t}+(n-2)\beta \phi'+
e^{2t} \bar q  \big) \phi'^{-3} \sqrt{\gamma} \ dtd\omega \nonumber \\
&-2 \int \partial_t v v \partial_t\big [|\phi^{''}|\big(\beta^2\phi'^2+\beta\phi'{\bar b}_t e^{2t}+(n-2)\beta \phi'+
e^{2t} \bar q    \big) \phi'^{-3} \sqrt{\gamma}   \big]\ dtd\omega \nonumber \\
&-2\int |\phi^{''}| |D_\omega v|^2 \big(\beta^2\phi'^2+\beta\phi'{\bar b}_t e^{2t}+(n-2)\beta \phi'+
e^{2t} \bar q \big) \phi'^{-3} \sqrt{\gamma} dtd\omega \nonumber \\
&-2 \int \beta |\phi^{''}|\phi'  \gamma^{ij}\partial_i v\partial_j{\bar b}_t e^{2t}  \phi'^{-3} \sqrt{\gamma} dtd\omega \nonumber \\
&-2 \int |\phi^{''}|  \gamma^{ij}\partial_i v\partial_j \bar{q}  e^{2t} v \phi'^{-3} \sqrt{\gamma} dtd\omega.
\end{align}
By the Cauchy-Schwartz inequality and the condition that $\beta>C(1+\|\bar b\|_{W^{1, \infty}}+\|\bar
q\|^{1/2}_{W^{1, \infty}})$, we arrive at
\begin{equation}
\mathcal{K}_3\geq -C\beta^2 \int |\phi^{''}|( |\partial_t v|^2+|D_\omega v|^2+v^2) \phi'^{-3} \sqrt{\gamma} dtd\omega.
\label{sogan}
\end{equation}
Since $\mathcal{K}_1$ is nonnegative, the combination of (\ref{fuckgui}), (\ref{soso}) and (\ref{sogan}) yields that
\begin{align}
\mathcal{A}^{''}_1&\geq C\beta^4 \left\Vert \sqrt{|\phi^{''}|} v \right\Vert_\phi^2- C\beta^2 \left\Vert \sqrt{|\phi^{''}|} \partial_t v \right\Vert_\phi^2 \nonumber \\
&-C\beta^2 \left\Vert \sqrt{|\phi^{''}|} |D_\omega v| \right\Vert_\phi^2.
\end{align}
From (\ref{basic}), it follows that
\begin{align}
\mathcal{A}^{'}_1&\geq C\hat{\alpha}\beta^3 \left\Vert \sqrt{|\phi^{''}|} v \right\Vert_\phi^2- C\hat{\alpha}\beta \left\Vert \sqrt{|\phi^{''}|} \partial_t v \right\Vert_\phi^2 \nonumber \\
&-C\hat{\alpha} \beta \left\Vert \sqrt{|\phi^{''}|} |D_\omega v| \right\Vert_\phi^2.
\label{dan1}
\end{align}
Recall that
$$\mathcal{A}'_2=\|\big(2\beta \phi'+e^{2t} {\bar
b}_t\big)\partial_t v+e^{2t}\bar{b}_i\partial_i v\|^2_{\phi}.
$$
By the triangle inequality, one has
$$\mathcal{A}'_2\geq 2\beta^2 \| \phi'\partial_t v\|^2_{\phi}- \|e^{2t} {\bar
b}_t \partial_t v+e^{2t}\bar{b}_i\partial_i v\|^2_{\phi}.
$$
It is obvious that
$$\mathcal{A}'_2\geq\frac{1}{\beta}\mathcal{A}'_2.   $$
From the assumption that $\beta>C(1+\|\bar b\|_{W^{1, \infty}}+\|\bar
q\|^{1/2}_{W^{1, \infty}})$, we obtain that
\begin{align}
\mathcal{A}'_2& \geq C\beta \| \phi'\partial_t v\|^2_{\phi}- C\beta \|e^{t}\partial_t v\|^2_{\phi}- C\beta \|e^{t}|D_\omega v|\|^2_{\phi}\nonumber \\
&\geq C\beta \| \phi'\partial_t v\|^2_{\phi}-C\beta \|e^{t}|D_\omega v|\|^2_{\phi}.
\label{dan2}
\end{align}
For the inner product $\mathcal{A}'_3$, using the arguments of integration by parts, since $e^t \ll 1$ as $t<T_0$ and $|T_0|$ is large enough, we can show a lower bound of $\mathcal{A}'_3$,
\begin{align}
\mathcal{A}'_3 &\geq C\beta \left\Vert \sqrt{|\phi^{''}|} |D_\omega v| \right\Vert_\phi^2- C\beta^3 \left\Vert e^t v\right\Vert_\phi^2
-C\beta \left\Vert \sqrt{|\phi^{''}|}\partial_t v \right \Vert_\phi^2 \nonumber \\
&-C\beta^2\left\Vert \sqrt{|\phi^{''}|}  v \right\Vert_\phi^2.
\label{dan3}
\end{align}
Recall that $ \mathcal{A}=\mathcal{A}'_1+\mathcal{A}'_2 +\mathcal{A}'_3 $. From (\ref{dan1}), (\ref{dan2}) and (\ref{dan3}), it follows that
\begin{align}
\mathcal{A}&\geq C\hat{\alpha} \beta^3 \int |\phi^{''}|v^2 \phi'^{-3} \sqrt{\gamma} dtd\omega+C\beta \int |\partial_t v|^2  \phi'^{-3} \sqrt{\gamma}
\nonumber \\& +
C\beta \int |\phi^{''}| |D_\omega v|^2 \phi'^{-3} \sqrt{\gamma} dtd\omega- C\beta^2 \int |\phi^{''}|  v^2 \phi'^{-3} \sqrt{\gamma} dtd\omega \nonumber \\&
-C\beta^3 \int e^{2t} v^2 \phi'^{-3} \sqrt{\gamma} dtd\omega- C\beta \int |\phi^{''}| |\partial_t v|^2 \phi'^{-3} \sqrt{\gamma} dtd\omega\nonumber \\
&-C\hat{\alpha}\beta \int |\phi^{''}| |D_\omega v|^2 \phi'^{-3} \sqrt{\gamma} dtd\omega- C\beta\int e^{2t} |D_\omega v|^2 \phi'^{-3} \sqrt{\gamma} dtd\omega.
\end{align}
If we choose $\hat{\alpha}$ to be appropriately small and take the fact $|\phi^{''}|>e^t$ into account, we obtain that
\begin{eqnarray}
C\mathcal{A}&\geq& \beta^3\int |\phi''||v|^2
\phi'^{-3}\sqrt{\gamma}\,dt d \omega +\beta \int |\phi''||D_\omega
v|^2
\phi'^{-3}\sqrt{\gamma}\,dt d \omega \nonumber \\
&&+\beta\int |\partial_t v|^2 \phi'^{-3}\sqrt{\gamma}\,dt d \omega.
\label{doit2}
\end{eqnarray}
Now we show $\mathcal{B}$ can be absorbed into $\mathcal{A}$ for large $|T_0|$ and large $\beta$. Since
$$|\partial_t \ln\sqrt{\gamma}|\leq C e^t\leq |\phi^{''}|,$$ then
\begin{align}
\mathcal{B}&=\|\beta \phi''v+\beta \partial_t \ln\sqrt{\gamma} \phi'
v+(n-2)\partial_t v+\partial_t \ln\sqrt{\gamma}\partial_t
v\|^2_{\phi} \nonumber \\
&\leq \beta^2 \int |\phi''| v^2 \phi'^{-3} \sqrt{\gamma} dt d \omega+ C\int |\partial_t v|^2 e^{2t} \phi'^{-3} \sqrt{\gamma} dt d \omega.
\label{kaogui}
\end{align}
Thus, the right hand side of (\ref{kaogui}) can be incorporated by the right hand side of (\ref{doit2}). Hence
the proof of the lemma is arrived.
\end{proof}
\begin{proof}[Proof of Lemma  \ref{carl}]
If we recall that $u=e^{-\beta\varphi(x)} v$, the proof of Lemma \ref{appen} just implies Lemma \ref{carl} stated in Section 2.
\end{proof}

\end{document}